\documentclass[reqno]{amsart}

\usepackage{amssymb}
\usepackage{enumitem}
\usepackage{hyperref}

\newcommand*{\mailto}[1]{\href{mailto:#1}{\nolinkurl{#1}}}
\newcommand{\arxiv}[1]{\href{http://arxiv.org/abs/#1}{arXiv: #1}}

%%%%%%%%%THEOREMS%%%%%%%%%%%%%%%%%%%%%%%%%%%%%%%%%%
\newtheorem*{theorem}{Theorem}
\newtheorem*{proposition}{Proposition}
\newtheorem{lemma}{Lemma}
\newtheorem*{definition}{Definition}
\newtheorem*{example}{Example}
\newtheorem*{remark}{Remark}
\newtheorem*{CP}{Coupling problem}

%%%%%%%%%%%%%%FONTS%%%%%%%%%%%%%%%%%%%%%%%%%%%%%%%%
\newcommand{\R}{{\mathbb R}}
\newcommand{\N}{{\mathbb N}}

\newcommand{\C}{{\mathbb C}}

%%%%%%%%%%%%%%%%%%ABBRS%%%%%%%%%%%%%%%%%%%%%%%%%%%%%
\newcommand{\spr}[2]{\langle #1 , #2 \rangle}

\newcommand{\E}{\mathrm{e}}
\newcommand{\I}{\mathrm{i}}

\newcommand{\tr}{\mathrm{tr}}
\newcommand{\im}{\mathrm{Im}}
\newcommand{\re}{\mathrm{Re}}

\newcommand{\OO}{\mathcal{O}}
\newcommand{\oo}{o}

\newcommand{\ledot}{\,\cdot\,}
\newcommand{\redot}{\cdot\,}

%%%%%%%%%%%%%%%%%%%%%%%%NUMBERING%%%%%%%%%%%%%%%%%%%%%%%%

%\numberwithin{equation}{section}

%%%%%%%%%%%%%%%%%%%%%%%%%%%%%%%%%%%%%%%%%%%%%%%%%%%%%%%%%%%%%%%%%%%%%%%%

\begin{document}

\title{Unique solvability of a coupling problem for entire functions}

\author[J.\ Eckhardt]{Jonathan Eckhardt}
\address{Faculty of Mathematics\\ University of Vienna\\ Oskar-Morgenstern-Platz 1\\ 1090 Wien\\ Austria}
\email{\mailto{jonathan.eckhardt@univie.ac.at}}
\urladdr{\url{http://homepage.univie.ac.at/jonathan.eckhardt/}}

%\thanks{... (to appear)}
%\thanks{\href{http://dx.doi.org/...}{... (to appear)}}
%\thanks{\href{http://dx.doi.org/...}{... {\bf X} (20XX), no.~X, pp--pp}}
\thanks{{\it Research supported by the Austrian Science Fund (FWF) under Grant No.\ J3455}}

\keywords{Coupling problem for entire functions, unique solvability, inverse spectral theory}
\subjclass[2010]{Primary 30D20, 34A55; Secondary 34B05, 37K15}

\begin{abstract}
 We establish the unique solvability of a coupling problem for entire functions which arises in inverse spectral theory for singular second order ordinary differential equations/two-dimensional first order systems and is also of relevance for the integration of certain nonlinear wave equations.
\end{abstract}

\maketitle

\section*{Results}

Let $\sigma$ be a discrete set of nonzero reals such that the sum
\begin{align}\label{eqnsigTC}
 \sum_{\lambda\in\sigma} \frac{1}{|\lambda|} 
\end{align}
is finite and define the real entire function $W$ of exponential type zero by
\begin{align}\label{eqnWprod}
 W(z) = \prod_{\lambda\in\sigma} \biggl( 1-\frac{z}{\lambda} \biggr), \quad z\in\C.
\end{align}
For a given sequence $\eta\in\hat{\R}^\sigma$ (referred to as {\em coupling constants} or {\em data}), where we denote with $\hat{\R} = \R\cup\{\infty\}$ the one-point compactification of $\R$, we consider the following task. 

\begin{CP}
Find a pair of real entire functions $(\Phi_-,\Phi_+)$ of exponential type zero such that the three conditions listed below are satisfied.
\begin{enumerate}[label=\emph{(\roman*)}, leftmargin=*, widest=W]
\item[{\em (C)}] Coupling condition:\footnote{To be precise, this condition has to be read as $\Phi_+(\lambda)=0$ whenever $\eta(\lambda) = \infty$.} 
\begin{align*}
 \Phi_-(\lambda) = \eta(\lambda) \Phi_+(\lambda), \quad\lambda\in\sigma
\end{align*}
\item[{\em (G)}] Growth and positivity condition:
\begin{align*}
  \im \biggl( \frac{z \Phi_-(z) \Phi_+(z)}{W(z)}\biggr) \geq 0, \quad \im(z)>0  
\end{align*}
\item[{\em (N)}] Normalization condition: 
\begin{align*}
 \Phi_-(0) = \Phi_+(0) = 1
\end{align*}
\end{enumerate}
\end{CP}
  
Let us first assume that the pair $(\Phi_-,\Phi_+)$ is a solution of the coupling problem with data~$\eta$. 
The growth and positivity condition~(G) means that the function
\begin{align}\label{eqnGHN}
 \frac{z \Phi_-(z) \Phi_+(z)}{W(z)}, \quad z\in\C\backslash\R,
\end{align}
is a so-called Herglotz--Nevanlinna function \cite[Chapter~VI]{akgl93}, \cite{kakr74}, \cite[Chapter~5]{roro94}.
Upon invoking the open mapping theorem, this first of all guarantees that all zeros of the functions $\Phi_-$ and $\Phi_+$ are  real. 
It furthermore entails that the zeros of the function in the numerator of~\eqref{eqnGHN} and the zeros of the function in the denominator of~\eqref{eqnGHN} are interlacing  (after possible cancelations); see \cite[Theorem~27.2.1]{le96}.
From this we may conclude that the functions $\Phi_-$ and $\Phi_+$ are actually of genus zero and satisfy the bound 
\begin{align}\label{eqnPhiBound}
 |\Phi_\pm(z)| \leq \prod_{\lambda\in\sigma} \biggl( 1 + \frac{|z|}{|\lambda|} \biggr), \quad z\in\C.
\end{align}
In fact, this inequality follows essentially from roughly estimating the individual factors in the corresponding Hadamard representation, with the normalization condition~(N) taken into account, and employing the interlacing property  mentioned above.
We should emphasize here that this upper bound is always independent of the actual coupling constants $\eta$.  
On the other side, the condition~(G) also tells us that the residues of the function in~\eqref{eqnGHN} at all poles are negative. 
In conjunction with the coupling condition~(C), this implies        
\begin{align*}
   \frac{\eta(\lambda)\Phi_+(\lambda)^2}{\lambda W'(\lambda)} \leq 0 
\end{align*}
for all those $\lambda\in\sigma$ for which the coupling constant $\eta(\lambda)$ is finite. 
Unless it happens that $\lambda$ is a zero of the function $\Phi_+$,  this constitutes a necessary restriction on the sign of the coupling constant $\eta(\lambda)$ in order for a solution of the coupling problem to exist. 
Roughly speaking, the coupling constants are expected to have alternating signs beginning with non-negative ones for those corresponding to the smallest (in modulus) positive and negative element of $\sigma$. 
Motivated by these considerations and the nature of our applications, we introduce the following terminology. 

\begin{definition}
Coupling constants $\eta\in\hat{\R}^\sigma$ are called  admissible if the inequality
\begin{align*}
   \frac{\eta(\lambda)}{\lambda W'(\lambda)} \leq 0 
\end{align*}
 holds for all those $\lambda\in\sigma$ for which $\eta(\lambda)$ is finite. 
 \end{definition}
 
The main purpose of the present article is to prove that this simple condition is sufficient to guarantee unique solvability of the corresponding coupling problem. 

\begin{theorem}[Existence and Uniqueness]
 If the coupling constants $\eta\in\hat{\R}^\sigma$ are admissible, then the coupling problem with data~$\eta$ has a unique solution. 
\end{theorem}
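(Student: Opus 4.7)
My plan is to handle uniqueness and existence separately. For uniqueness, I would work with an entire auxiliary function built from cross products of two hypothetical solutions. For existence, I would truncate $\sigma$ to finite subsets, solve the finite-dimensional coupling problem, and extract a limit by compactness.

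\textit{Uniqueness.} Let $(\Phi_-,\Phi_+)$ and $(\tilde\Phi_-,\tilde\Phi_+)$ be two solutions of the coupling problem with the same data $\eta$. Consider the entire function
\[
 D(z) = \Phi_-(z)\tilde\Phi_+(z) - \tilde\Phi_-(z)\Phi_+(z).
\]
It vanishes at every $\lambda\in\sigma$: at $\lambda$ with $\eta(\lambda)\in\R$, the coupling condition (C) gives $\Phi_-(\lambda)\tilde\Phi_+(\lambda) = \eta(\lambda)\Phi_+(\lambda)\tilde\Phi_+(\lambda) = \tilde\Phi_-(\lambda)\Phi_+(\lambda)$, while at $\lambda$ with $\eta(\lambda)=\infty$, both $\Phi_+(\lambda)$ and $\tilde\Phi_+(\lambda)$ vanish. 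Hence $D/W$ is entire, and (N) also forces $D(0)=0$. The bound \eqref{eqnPhiBound}, applied to both pairs, combined with a minimum-modulus estimate for the canonical product $W$ on circles avoiding small neighborhoods of $\sigma$, shows $D/W$ is entire of exponential type zero. To force $D\equiv 0$, I would compare the Herglotz--Nevanlinna functions $g=z\Phi_-\Phi_+/W$ and $\tilde g=z\tilde\Phi_-\tilde\Phi_+/W$: both have poles only in $\sigma$, residue signs controlled by admissibility, and both satisfy $g(0)=\tilde g(0)=0$ with $g'(0)=\tilde g'(0)=1$. A Nevanlinna-measure comparison, tied to the zero structure forced by $D/W$, should yield $g=\tilde g$ and consequently $D\equiv 0$. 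Once $D\equiv 0$, the meromorphic identity $\Phi_-/\Phi_+ = \tilde\Phi_-/\tilde\Phi_+$ combined with the interlacing property and the normalization pins down $\Phi_\pm=\tilde\Phi_\pm$.

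\textit{Existence.} Enumerate $\sigma=\{\lambda_n\}_{n\geq 1}$ with $|\lambda_1|\leq|\lambda_2|\leq\cdots$ and, for each $N$, set $\sigma_N=\{\lambda_1,\ldots,\lambda_N\}$ together with the truncated product $W_N(z)=\prod_{\lambda\in\sigma_N}(1-z/\lambda)$. The finite coupling problem with data $\eta|_{\sigma_N}$ reduces to constructing a rational Herglotz--Nevanlinna function with poles in $\sigma_N$ and residue signs dictated by admissibility, and then factoring its numerator polynomial as $\Phi_-^{(N)}\Phi_+^{(N)}$ in accordance with (C) and (N). I would solve this by induction on $N$, inserting the points one at a time and letting the degrees of $\Phi_\pm^{(N)}$ adjust as required by the sign pattern. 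The bound \eqref{eqnPhiBound} is independent of $\eta$ and hence uniform in $N$, so Montel's theorem yields a subsequence of $(\Phi_-^{(N)},\Phi_+^{(N)})$ converging locally uniformly on $\C$ to a pair $(\Phi_-,\Phi_+)$ of entire functions. Conditions (N) and (G) pass to the limit directly, and (C) at any fixed $\lambda\in\sigma$ holds because $\lambda\in\sigma_N$ for all sufficiently large $N$; a standard argument with the uniform bound confirms the limit has exponential type zero.

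\textit{Main obstacle.} I expect the delicate step to be completing the uniqueness argument by ruling out a nontrivial $D/W$. Growth estimates and vanishing on $\sigma\cup\{0\}$ alone are not enough; the argument must invoke (G) essentially, through the Nevanlinna integral representations of $g$ and $\tilde g$ and the interlacing of their numerator and denominator zeros. The existence half, while technical in the finite-dimensional reduction (where the distribution of degrees between $\Phi_-^{(N)}$ and $\Phi_+^{(N)}$ depends sensitively on $\eta$), follows a standard truncation-and-compactness template once the finite case is handled.
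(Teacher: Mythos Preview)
Your existence outline matches the paper's approach: it too truncates $\sigma$ to finite sets, solves the finite problem, and passes to a limit via Montel's theorem using the uniform bound \eqref{eqnPhiBound}. For the finite case the paper is more explicit than your induction sketch: it writes down the rational Herglotz--Nevanlinna function $m(z)=-\frac{1}{2z}-\frac{1}{2}\sum_{\lambda\in\sigma}\frac{1}{\lambda-z}\frac{\eta(\lambda)}{\lambda W'(\lambda)}$, expands it as a finite continued fraction (Lemma~\ref{lem2}), and reads off $\Phi_\pm$ from a particular cut in the associated recursion. Your inductive construction would have to produce something equivalent, but this portion is not where the difficulty lies.

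The uniqueness argument, however, has a real gap, and it is precisely the one you flagged but did not close. Knowing that $D/W$ is entire of exponential type zero vanishing at $0$ is nowhere near enough to force $D\equiv 0$, and your proposed ``Nevanlinna-measure comparison'' does not do it either: the residue of $g=z\Phi_-\Phi_+/W$ at $\lambda\in\sigma$ equals $-\lambda\eta(\lambda)\Phi_+(\lambda)^2/W'(\lambda)$, which depends on the unknown $\Phi_+(\lambda)$. So the Nevanlinna measures of $g$ and $\tilde g$ share their support in $\sigma$ but not their masses, and the normalizations $g(0)=\tilde g(0)=0$, $g'(0)=\tilde g'(0)=1$ impose a single scalar constraint on infinitely many free parameters (the residues together with a possible linear term). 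Nothing in your outline explains how condition (G) eliminates this freedom.

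The paper's uniqueness proof takes an entirely different and much heavier route, through de Branges' theory of Hilbert spaces of entire functions. From each solution it constructs (Lemma~\ref{lem1}) a pair of de Branges spaces isometrically embedded in a common $L^2(\R;\mu)$, with $\mu$ determined by $\sigma$ and $\eta$ alone, and it recovers $\Phi_+$ from the reproducing kernels at zero. The decisive input is the de Branges--Kotani subspace ordering theorem, which forces the de Branges spaces attached to two putative solutions to be totally ordered; a comparison of $K(0,0)$ values, sandwiched by inequalities from Lemma~\ref{lem1}, then pins down the spaces and hence $\Phi_+$. The paper even remarks that an elementary uniqueness proof is unlikely, since it would trivially imply several inverse-spectral uniqueness results that previously required substantial work. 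Your cross-product $D$ does not seem to access the rigidity the ordering theorem supplies.
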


Apart from this result, we will also establish the fact that the solution of the coupling problem depends in a continuous way on the given data. 

\begin{proposition}[Stability]
 Let $\eta_k\in\hat{\R}^\sigma$ be a sequence of admissible coupling constants that converge to some coupling constants $\eta$ (in the product topology).
 Then the solutions of the coupling problems with data~$\eta_k$ converge locally uniformly to the solution of the coupling problem with (admissible) data~$\eta$. 
\end{proposition}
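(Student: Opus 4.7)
The plan is to combine a normal-family compactness argument with the uniqueness statement of the Theorem. The key input is that the a priori bound \eqref{eqnPhiBound} on any solution of the coupling problem is independent of the coupling constants. Writing $(\Phi_-^k,\Phi_+^k)$ for the unique solution corresponding to the admissible data $\eta_k$, this gives
\[
 |\Phi_\pm^k(z)| \leq \prod_{\lambda\in\sigma}\biggl(1 + \frac{|z|}{|\lambda|}\biggr), \quad z\in\C,
\]
uniformly in $k$. Since the right-hand side is a locally bounded entire function, the families $\{\Phi_-^k\}_k$ and $\{\Phi_+^k\}_k$ are normal, and Montel's theorem produces a subsequence along which both converge locally uniformly on $\C$ to entire limit functions $\Phi_-$ and $\Phi_+$, which inherit the same bound and are in particular of exponential type zero.

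Next I would verify that the limit data $\eta$ is admissible and that the limit pair $(\Phi_-,\Phi_+)$ solves the coupling problem with data $\eta$. Condition (N) is immediate from pointwise convergence at the origin. For (G), on every compact subset of the open upper half-plane the function $W$ is bounded away from zero, so the quotients $z\Phi_-^k\Phi_+^k/W$ converge locally uniformly there and nonnegativity of the imaginary part passes to the limit. Admissibility of $\eta$ and the coupling identity (C) are checked pointwise at each $\lambda\in\sigma$. If $\eta(\lambda)\in\R$, then $\eta_k(\lambda)\in\R$ for all sufficiently large $k$ and both the admissibility inequality and the identity $\Phi_-^k(\lambda) = \eta_k(\lambda)\Phi_+^k(\lambda)$ pass to the limit directly. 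If $\eta(\lambda) = \infty$, then either $\eta_k(\lambda) = \infty$ eventually (so $\Phi_+^k(\lambda) = 0$ for those $k$) or $\eta_k(\lambda)$ is finite with $|\eta_k(\lambda)|\to\infty$, in which case the $k$-independent bound on $\Phi_-^k(\lambda)$ forces $\Phi_+^k(\lambda) = \Phi_-^k(\lambda)/\eta_k(\lambda)\to 0$; in either case $\Phi_+(\lambda)=0$, which is the correct reading of (C) as explained in the footnote.

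Finally, I would invoke uniqueness. Since $\eta$ is admissible, the Theorem shows that the coupling problem with data $\eta$ has exactly one solution, and the argument above shows that every subsequential locally-uniform limit of $(\Phi_-^k,\Phi_+^k)$ must coincide with it. The standard principle that a relatively compact sequence with a single possible limit point converges to that point then upgrades this to local uniform convergence of the full sequence. The decisive ingredient throughout is the universal bound \eqref{eqnPhiBound}, which simultaneously supplies compactness and disposes of the awkward oscillating case $\eta(\lambda)=\infty$ with $\eta_k(\lambda)$ finite; this oscillating case is the only point I expect to require real care, and it is disarmed cleanly by the uniform bound on $\Phi_-^k(\lambda)$.
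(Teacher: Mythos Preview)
Your proposal is correct and follows essentially the same approach as the paper: both use the uniform bound~\eqref{eqnPhiBound} to obtain normal families, extract a locally uniformly convergent subsequence via Montel's theorem, verify that any subsequential limit satisfies (C), (G), (N) for the limiting data~$\eta$, and then invoke uniqueness together with compactness to upgrade subsequential convergence to convergence of the full sequence. Your treatment of the case $\eta(\lambda)=\infty$ is in fact slightly more explicit than the paper's, which dispatches it with the phrase ``in a similar manner.''
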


In the simple case when the set $\sigma$ consists of only one point, it is possible to write down solutions explicitly in terms of the single coupling constant. 

\begin{example}
 Suppose that $\sigma = \lbrace \lambda_0 \rbrace$ for some nonzero $\lambda_0\in\R$ so that 
 \begin{align*}
   W(z) = 1-\frac{z}{\lambda_0}, \quad z\in\C. 
 \end{align*}
 From the very definition, we readily see that some $\eta\in\hat{\R}^\sigma$ is admissible if and only if the coupling constant $\eta(\lambda_0)$ is not a negative real number. 
 In this case, the unique  solution $(\Phi_-,\Phi_+)$ of the coupling problem with data~$\eta$ is given by
 \begin{align*}
  \Phi_\pm(z) = 1 - z \frac{1- \min\bigl(1,\eta(\lambda_0)^{\mp 1}\bigr)}{\lambda_0}, \quad z\in\C,
 \end{align*}
 which has to be interpreted in an appropriate way when $\eta(\lambda_0)$ is equal to zero or not finite.
 Otherwise, when the coupling constant $\eta(\lambda_0)$ is a  negative real number, the coupling problem with data~$\eta$ has no solution at all.  
 % If $(\Phi_-,\Phi_+)$ was a solution, then~\eqref{eqnetaphisign} would show that $\Phi_+(\lambda_0)=0$ and thus also $\Phi_-(\lambda_0)=0$ due to the coupling condition. Thus the function in~\eqref{eqnGHN} would have a root at zero and at $\lambda_0$ but no pole in between. 
\end{example}

The following observation sheds some light on what happens in the general situation, when the coupling constants are not necessarily admissible. 

\begin{remark}
  Let $\eta\in\hat{\R}^\sigma$ be coupling constants and define the sequence $\tilde{\eta}\in\hat{\R}^\sigma$ by 
  \begin{align*}
    \tilde{\eta}(\lambda) = \begin{cases} \eta(\lambda), & \lambda\in\sigma\backslash\rho, \\ 0, & \lambda\in\rho, \end{cases}
  \end{align*}
  where the set $\rho$ consists of all those $\lambda\in\sigma$ for which $\eta(\lambda)$ is finite and 
  \begin{align*}
    \frac{\eta(\lambda)}{\lambda W'(\lambda)} > 0.
  \end{align*}
  Since the coupling constants $\tilde{\eta}$ are admissible, there is a unique solution $(\Phi_-,\Phi_+)$ of the coupling problem with data $\tilde{\eta}$. 
 Now one can show that the coupling problem with data~$\eta$ is solvable if and only if the function $\Phi_+$ vanishes on the set $\rho$. 
 % If $(\Psi_-,\Psi_+)$ is a solution of the coupling problem with data $\eta$, then~\eqref{eqnetaphisign} shows that $\Psi_+(\lambda)=0$ and thus also $\Psi_-(\lambda)=0$ for all $\lambda\in\rho$. For this reason, the pair $(\Psi_-,\Psi_+)$ is also a solution of the coupling problem with data $\tilde{\eta}$. By the uniqueness theorem for admissible data, we see that $(\Psi_-,\Psi_+)$ coincides with $(\Phi_-,\Phi_+)$, which shows that $\Phi_+$ vanishes on $\rho$. 
 %Conversely, if $\Phi_+$ vanishes on $\rho$, then $\Phi_-(\lambda) = 0 = \eta(\lambda)\Phi_+(\lambda)$ for all $\lambda\in\rho$ and it follows readily that $(\Phi_-,\Phi_+)$ is also a solution of the coupling problem with data $\eta$. 
  In this case, the solution of the coupling problem with data $\eta$ is unique and coincides with the solution of the coupling problem with data $\tilde{\eta}$. 
\end{remark}

Before we proceed to the proofs of our results, let us point out two applications that constitute our main motivation for considering this coupling problem for entire functions. 
First and foremost, the coupling problem is essentially equivalent to an inverse spectral problem for second order ordinary differential equations or two-dimensional first order systems with trace class resolvents.  
This circumstance indicates that it is not likely for a simple elementary proof of our theorem to exist, as the uniqueness part allows one to effortlessly deduce (generalizations of) results in \cite{besasz00}, \cite{bebrwe15}, \cite{ThreeSpectra}, \cite{ConservMP}, \cite{IsospecCH}, which had to be proven in a more cumbersome way before.
On the other side, the coupling problem is also of relevance for certain completely integrable nonlinear wave equations (with the Camassa--Holm equation \cite{chh}, \cite{besasz07} and the Hunter--Saxton equation \cite{husa91} being the prime examples) when the underlying isospectral problem has purely discrete spectrum. 
For these kinds of equations, the coupling problem takes the same role as Riemann--Hilbert problems do in the case when the associated spectrum has a continuous component; see \cite{abcl91}, \cite{deitzh93}, \cite{bosh2}. 
In particular, the stability result for the coupling problem enables us to derive long-time asymptotics for solutions of such nonlinear wave equations \cite{CouplingProblem}.

%%%%%%%%%%%%%%%%%%%%%%%%%
\subsection*{Inverse spectral theory}
As a prototypical example, we are going to discuss the spectral problem for an inhomogeneous vibrating string  
\begin{align}\label{eqnDEstring}
  -f'' = z\,\omega f
\end{align}
 on the interval $(0,1)$, where $z$ is a complex spectral parameter and $\omega$ is a positive Borel measure on $(0,1)$ representing the mass distribution of the string.
 We impose a growth restriction on the measure $\omega$ to the extent that the integral
\begin{align*}
  \int_{0}^1 (1-x)x\, d\omega(x) 
\end{align*}
is finite. 
Despite both endpoints being potentially singular, these conditions guarantee that the associated Dirichlet spectrum $\sigma$ is a discrete set of positive reals such that the sum~\eqref{eqnsigTC} is finite (we refer to \cite[Section~2]{ThreeSpectra} for details). 
This fact is reflected by the existence of two solutions $\phi(z,\redot)$ and $\psi(z,\redot)$ of the differential equation~\eqref{eqnDEstring} with the asymptotics 
\begin{align*}
 \phi(z,x) & \sim x, \quad x\rightarrow 0, &  \psi(z,x) & \sim 1- x, \quad x\rightarrow 1,
\end{align*}
such that $\phi(\ledot,x)$ and $\psi(\ledot,x)$ are real entire functions of genus zero. 
Because the spectrum $\sigma$ consists precisely of those $z$ for which the solutions $\phi(z,\redot)$ and $\psi(z,\redot)$ are linearly dependent, we may infer that the function $W$ defined by~\eqref{eqnWprod} is nothing but the Wronskian of these solutions, that is, one has 
\begin{align*}
  W(z) = \psi(z,x)\phi'(z,x) - \psi'(z,x)\phi(z,x), \quad x\in(0,1),~z\in\C,
\end{align*}
where we take the unique left-continuous representatives of the derivatives. 

Our interest here lies in a  particular associated inverse spectral problem which consists in recovering the Borel measure $\omega$ from the spectrum $\sigma$ and the sequence of accompanying norming constants $\gamma_{\lambda}$ defined by 
\begin{align*}
  \gamma_{\lambda}^2 = \int_{0}^1 \phi'(\lambda,x)^2 dx, \quad \lambda\in\sigma.
\end{align*}
In order to work out the connection to the coupling problem, we first mention that for every eigenvalue $\lambda\in\sigma$ one has the relation 
\begin{align*}
  \phi(\lambda,x) = -\frac{\gamma_{\lambda}^2}{\lambda W'(\lambda)} \psi(\lambda,x), \quad x\in(0,1),
\end{align*}
which is somewhat reminiscent of the coupling condition.
Furthermore, the growth and positivity condition will be due to the fact that the function 
\begin{align*}
  \frac{z\phi(z,x)\psi(z,x)}{W(z)} = \biggl(\frac{\phi'(z,x)}{z\phi(z,x)} - \frac{\psi'(z,x)}{z\psi(z,x)}\biggr)^{-1}, \quad z\in\C\backslash\R,
\end{align*}
is  a Herglotz--Nevanlinna function for all $x\in(0,1)$.
As a final ingredient, it remains to note the identity   
\begin{align*}
 \left.\frac{\partial}{\partial z} \phi(z,x)\right|_{z=0} = -\int_0^x \int_0^s r\,d\omega(r)\, ds, \quad x\in(0,1).
\end{align*}
Upon simply normalizing the functions $\phi(\ledot,x)$ and $\psi(\ledot,x)$ at zero, we are now in the position to observe the following:
{\em For every given $x\in(0,1)$ one has  
\begin{align*}
  \int_0^x \int_0^s r\,d\omega(r)\, ds = - x\,\Phi_-'(0),
\end{align*}
where the pair $(\Phi_-,\Phi_+)$ is the unique solution of the coupling problem with (admissible) data~$\eta$ given by } 
\begin{align*}
  \eta(\lambda) = - \frac{\gamma_{\lambda}^2}{\lambda W'(\lambda)} \frac{1-x}{x}, \quad \lambda\in\sigma.
\end{align*} 
Hence we are able to retrieve the measure $\omega$ from the spectrum and the norming constants by means of solving a family of coupling problems.
In particular, this guarantees that  $\omega$ is uniquely determined by the given spectral data, a fact that usually requires considerable effort \cite{bebrwe08}, \cite{bebrwe15}, \cite{LeftDefiniteSL}, \cite{CHPencil}, \cite{ko75}. 
More generally, the coupling problem can also be employed to solve analogous inverse spectral problems for indefinite strings as in~\cite{IndefiniteString} or canonical systems with two singular endpoints.

%%%%%%%%%%%%%%%%%%%%%%%%%%%%%%
\subsection*{Nonlinear wave equations}
 Let us consider the Camassa--Holm equation 
  \begin{align*}
   u_{t} -u_{xxt}  = 2u_x u_{xx} - 3uu_x + u u_{xxx},
  \end{align*}
 which arises as a model for unidirectional wave propagation on shallow water  \cite{chh}. 
 Associated with a solution $u$ is the family of spectral problems 
 \begin{align}\label{eqnISO}
 -f'' + \frac{1}{4} f = z\, \omega(\ledot,t) f, \qquad \omega = u - u_{xx},
\end{align}
 whose significance lies in the fact that their corresponding spectra are independent of the time parameter $t$.  
 In the case when $u$ is real-valued and such that the integral 
 \begin{align*}
   \int_\R |u(x,t) - u_{xx}(x,t)|\, dx  
 \end{align*} 
 is finite for one (and hence for all) $t$, the common spectrum $\sigma$  is a discrete set of nonzero reals such that the sum~\eqref{eqnsigTC} is finite. 
 Apart from this, these assumptions also guarantee the existence of two solutions $\phi_-(z,\cdot\,,t)$ and $\phi_+(z,\cdot\,,t)$ of the differential equation~\eqref{eqnISO} with the spatial asymptotics
\begin{align*}
  \phi_-(z,x,t) & \sim \E^{\frac{x}{2}}, \quad x\rightarrow-\infty,  &  \phi_+(z,x,t) & \sim \E^{-\frac{x}{2}}, \quad x\rightarrow\infty, 
\end{align*}
such that $\phi_-(\ledot,x,t)$ and $\phi_+(\ledot,x,t)$ are real entire functions of genus zero. 
 The function $W$ defined by~\eqref{eqnWprod} is precisely the Wronskian of these solutions;
 \begin{align*}
  W(z) & = \phi_+(z,x,t) \phi_-'(z,x,t) - \phi_+'(z,x,t) \phi_-(z,x,t), \quad z\in\C,~x\in\R,
 \end{align*}
 independent of time $t$. 
 For every eigenvalue $\lambda\in\sigma$, we therefore may write 
\begin{align*}
  \phi_-(\lambda,x,t) = c_\lambda(t) \phi_+(\lambda,x,t), \quad x\in\R,
\end{align*}
 with some real-valued function $c_\lambda$.
 The crucial additional fact for this to be useful is that the time evolution for these quantities is known explicitly and given by  
\begin{align*}
 c_\lambda(t) = c_{\lambda}(0) \E^{\frac{t}{2\lambda}}, \quad \lambda\in\sigma. 
\end{align*}
Of course, this simple behavior of the spectral data is highly exceptional and only due to the completely integrable structure of the Camassa--Holm equation. 

Before we are able to substantiate the importance of the coupling problem in this context, we are left to note that the function  
\begin{align*}
  \frac{z\phi_-(z,x,t)\phi_+(z,x,t)}{W(z)} =  \biggl(\frac{\phi_-'(z,x,t)}{z\phi_-(z,x,t)} - \frac{\phi_+'(z,x,t)}{z\phi_+(z,x,t)}\biggr)^{-1}, \quad z\in\C\backslash\R,
\end{align*}
is a Herglotz--Nevanlinna function for all $x$ and $t$ with   
\begin{align*}
  \left. \frac{\partial^2}{\partial z^2} \frac{z\phi_-(z,x,t)\phi_+(z,x,t)}{W(z)}\right|_{z=0} = 4u(x,t).
\end{align*}
After taking the normalizations of the functions $\phi_-(\ledot,x,t)$ and $\phi_+(\ledot,x,t)$ at zero into account, we may now state the following: {\em For any given $x$ and $t$, we have 
\begin{align*}
  u(x,t) =     \frac{\Phi_-'(0) + \Phi_+'(0)}{2} + \frac{1}{2} \sum_{\lambda\in\sigma} \frac{1}{\lambda},
\end{align*}
where the pair $(\Phi_-,\Phi_+)$ is the unique solution of the coupling problem with (admissible) data~$\eta$ given by }
\begin{align*}
  \eta(\lambda) = c_\lambda(0)  \E^{\frac{t}{2\lambda}-x}, \quad \lambda\in\sigma. 
\end{align*}
Thus we may recover the solution $u$  by means of solving coupling problems whose data are given explicitly in terms of the associated spectral data at an initial time.

%%%%%%%%%%%%%%%%%%%%
\section*{Proofs}
%%%%%%%%%%%%%%%%%%%%

Since we are going to employ de Branges' theory of Hilbert spaces of entire functions~\cite{dB68} to establish the uniqueness part of our theorem, we begin with summarizing some necessary notation. 
First, an entire function $E$ is called a {\em de Branges function} if it satisfies the inequality 
\begin{align*}
 |E(z)| > |E(z^\ast)|
\end{align*}
for all $z$ in the open upper complex half-plane. 
Associated with such a function is a {\em de Branges space} $\mathcal{B}(E)$. 
It consists of all entire functions $F$  such that the integral 
\begin{align*}
 \int_\R \frac{|F(\lambda)|^2}{|E(\lambda)|^2} d\lambda 
\end{align*}
is finite and such that the two quotients $F/E$ and $F^\#/E$ are of bounded type in the upper half-plane with non-positive mean type, where $F^\#$ is the entire function defined by
\begin{align*}
 F^\#(z) = F(z^\ast)^\ast, \quad z\in\C.
\end{align*}
Endowed with the inner product
\begin{align*}
 \spr{F}{G} =  \int_\R \frac{F(\lambda)G(\lambda)^\ast}{|E(\lambda)|^2} d\lambda, \quad F,\,G\in\mathcal{B}(E),
\end{align*}
 the space $\mathcal{B}(E)$ turns into a reproducing kernel Hilbert space; see~\cite[Theorem~19 and Theorem~21]{dB68}.
 For each $\zeta\in\C$, the point evaluation in $\zeta$ can be written as
 \begin{align*}
  F(\zeta) = \spr{F}{K(\zeta,\cdot\,)}, \quad F\in \mathcal{B}(E),
 \end{align*}
 where the entire function $K(\zeta,\redot)$ is given by 
 \begin{align*}
  K(\zeta,z) = \frac{E(z)E^\#(\zeta^\ast)-E^\#(z) E(\zeta^\ast)}{2\pi\I (\zeta^\ast-z)}, \quad z\not=\zeta^\ast. 
 \end{align*} 
 We now show how de Branges spaces arise in connection with our coupling problem.
 
\begin{lemma}\label{lem1}
 Let $\eta\in\hat{\R}^\sigma$ be such that $\eta(\lambda)$ is finite and non-zero for every $\lambda\in\sigma$ and suppose that the pair $(\Phi_-,\Phi_+)$ is a solution of the coupling problem with data~$\eta$.
 Unless the function $\Phi_+$ is constant, there are two de Branges functions $E_1$ and $E_2$ of exponential type zero without real roots such that the following properties hold:  
\begin{enumerate}[label=(\roman*), ref=(\roman*), leftmargin=*, widest=iii]
\item The de Branges functions $E_1$ and $E_2$ are normalized by 
\begin{align*}
   -2 E_1(0) = -2 E_2(0) = 1. 
\end{align*}
\item The de Branges spaces $\mathcal{B}(E_1)$ and $\mathcal{B}(E_2)$ are both isometrically embedded in the space $L^2(\R;\mu)$, where the Borel measure $\mu$ on $\R$ is given by 
\begin{align*}
  \mu = \pi \delta_0 + \pi \sum_{\lambda\in\sigma} \frac{|\eta(\lambda)|}{|\lambda W'(\lambda)|} \delta_\lambda
\end{align*}
and $\delta_z$ denotes the unit Dirac measure centered at $z$. 
\item The corresponding reproducing kernels $K_1$ and $K_2$ satisfy the inequality
\begin{align*}
  2\pi K_2(0,0) \geq 1 \geq 2\pi K_1(0,0).
\end{align*}
\item The space $\mathcal{B}(E_1)$ is a closed subspace of $\mathcal{B}(E_2)$ with codimension at most one. 
 If $\mathcal{B}(E_1)$ coincides with $\mathcal{B}(E_2)$, then 
 \begin{align*}
   \Phi_+(z) = 2\pi K_1(0,z) = 2\pi K_2(0,z), \quad z\in\C.
 \end{align*}
 Otherwise, when $\mathcal{B}(E_1)$ has codimension one in $\mathcal{B}(E_2)$, we have 
 \begin{align*}
   \Phi_+(z) & = 2\pi K_1(0,z) + \Theta(z) \frac{1-2\pi K_1(0,0)}{\Theta(0)} \\ 
     & = 2\pi K_2(0,z) - \Theta(z) \frac{2\pi K_2(0,0)-1}{\Theta(0)}, \quad z\in\C,
 \end{align*}
 where $\Theta$ is any nontrivial function in $\mathcal{B}(E_2)$ which is orthogonal to $\mathcal{B}(E_1)$. 
\end{enumerate} 
If the function $\Phi_+$ is constant, then there is a polynomial de Branges function $E_0$ of degree one without real roots such that the following properties hold:
\begin{enumerate}[label=(\roman*), ref=(\roman*), leftmargin=*, widest=iii]
\item The de Branges function $E_0$ is normalized by 
\begin{align*}
   -2 E_0(0) = 1. 
\end{align*}
\item The de Branges space $\mathcal{B}(E_0)$ is isometrically embedded in the space $L^2(\R;\mu)$. 
\item The corresponding reproducing kernel $K_0$ satisfies the inequality
\begin{align*}
  2\pi K_0(0,0) \geq 1.
\end{align*}
\item The space $\mathcal{B}(E_0)$ is one-dimensional and  
 \begin{align*}
   \Phi_+(z) = \frac{K_0(0,z)}{K_0(0,0)}, \quad z\in\C.
 \end{align*}
\end{enumerate} 
\end{lemma}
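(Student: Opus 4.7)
My plan is to realize the coupling data inside de Branges' correspondence between Herglotz--Nevanlinna functions and reproducing-kernel Hilbert spaces of entire functions. The central object is the Herglotz--Nevanlinna function $g(z) := z\Phi_-(z)\Phi_+(z)/W(z)$ provided by condition~(G). I would first write down its partial-fraction representation: the poles are simple and located at those $\lambda\in\sigma$ where $\Phi_+$ (equivalently $\Phi_-$, by condition~(C) together with $\eta(\lambda)\neq 0$) does not vanish, with residues $-\lambda\eta(\lambda)\Phi_+(\lambda)^2/W'(\lambda)\geq 0$ by admissibility. The weights $|\eta(\lambda)|/|\lambda W'(\lambda)|$ defining the measure $\mu$ are precisely these residues divided by $\lambda^2\Phi_+(\lambda)^2$, so $\mu$ is the sampling measure naturally attached to $g$.

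Next I would construct $E_2$ by setting $B_2(z) := -z\Phi_+(z)$ and taking $A_2$ to be the real entire companion for which $B_2/A_2 = g$; concretely, after cancelling the common zeros of $\Phi_-$ and $\Phi_+$ against the corresponding factors of $W$, this amounts to $A_2(z) = -\tfrac{1}{2}W(z)/\Phi_-(z)$, rendered entire precisely by those cancellations. The resulting $E_2 := A_2 - iB_2$ should then be a de Branges function: its exponential type is zero by the estimate~\eqref{eqnPhiBound} together with the product formula~\eqref{eqnWprod} for $W$; the de Branges inequality is equivalent to $\mathrm{Im}(B_2/A_2) > 0$, i.e.\ to condition~(G); the normalizations $B_2(0)=0$ and $A_2(0)=-\tfrac12$ yield~(i); and the reproducing-kernel formula produces $2\pi K_2(0,z) = -B_2(z)/z = \Phi_+(z)$, which is precisely~(iv) in the equal-space scenario. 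The isometric embedding~(ii) I would then prove via the standard sampling identity in $\mathcal{B}(E_2)$: the reproducing kernels at the real zeros of $B_2$ form an orthogonal basis, and the residue computation above matches their $\mathcal{B}(E_2)$-norms against the point weights of $\mu$ exactly.

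The smaller space $\mathcal{B}(E_1)$ should be a codimension-at-most-one subspace of $\mathcal{B}(E_2)$, reflecting the slack in matching the normalization $\Phi_+(0)=1$ to the reproducing-kernel value. I would take $E_1$ such that $\mathcal{B}(E_1) = \{F\in\mathcal{B}(E_2) : \langle F,\Theta\rangle = 0\}$ for an appropriate $\Theta\in\mathcal{B}(E_2)$ (or $E_1=E_2$ when no removal is needed); the de Branges subspace theorem furnishes such an $E_1$ as a genuine de Branges function. Condition~(iii) then expresses that $\Phi_+(0)=1$ is squeezed between the reproducing-kernel values of the two spaces, which is exactly the range in which $\Phi_+$ admits the combinations in~(iv); those identities fall out by direct computation using (i)--(iii) and the two-term characterization of the orthogonal complement. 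The constant case $\Phi_+\equiv 1$ is treated separately: I would take $E_0(z) := -\tfrac12 + i\alpha z$ with $\alpha > 0$ chosen so that the one-dimensional space of constants embeds isometrically into $L^2(\mathbb{R};\mu)$, and then $\alpha \geq 1$ follows from the constraint $\sum_\lambda |\eta(\lambda)|/|\lambda W'(\lambda)| \leq 1$ that is forced when $\Phi_+$ is constant.

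The main technical hurdle will be to carefully disentangle the common zeros of $\Phi_-$ and $\Phi_+$ (which cause cancellations in $g$ but not in $\Phi_\pm$ themselves), so that the candidate $E_2$ is genuinely free of real zeros and of exponential type zero, and to verify that the zeros of $\Phi_+$ lying outside $\sigma$---forced into the interlacing gaps by condition~(G)---do not introduce spurious point masses in the $L^2(\mathbb{R};\mu)$ identification. Matching the residue calculation to the de Branges sampling identity cleanly in the presence of these cancellations is the delicate bookkeeping step that drives the whole proof.
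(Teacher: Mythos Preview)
Your construction of $E_2$ breaks down at the very first step. You set $B_2(z)=-z\Phi_+(z)$ and then propose $A_2(z)=-\tfrac12 W(z)/\Phi_-(z)$, claiming that the only obstructions to entirety are the \emph{common} zeros of $\Phi_-$ and $\Phi_+$. But $\Phi_-$ generically has zeros that lie neither in $\sigma$ nor in the zero set of $\Phi_+$: already in the one-point example $\sigma=\{\lambda_0\}$ with $\eta(\lambda_0)\in(0,1)$ one has $\Phi_-(z)=1-z(1-\eta(\lambda_0))/\lambda_0$, whose unique zero is $\lambda_0/(1-\eta(\lambda_0))\notin\sigma$, so $W/\Phi_-$ has a genuine pole there. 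Thus $A_2$ is not entire and $E_2=A_2-\I B_2$ is not a de Branges function. What the paper does instead is to expand the \emph{reciprocal} $-W(z)/(z\Phi_-\Phi_+)$ and split it as $m_-+m_+$ so that $m_\pm$ carries precisely the poles at the zeros of $\Phi_\pm$; the entire companion is then $\Psi_+(z)=z\Phi_+(z)m_+(z)$, not $W/\Phi_-$. This splitting is the missing idea in your proposal.

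There is a second gap in your plan for item~(ii). With $B_2=-z\Phi_+$, the orthogonal sampling basis of $\mathcal{B}(E_2)$ you invoke sits at the zeros of $\Phi_+$ (together with $0$), whereas the target measure $\mu$ is supported on $\sigma\cup\{0\}$; these sets are in general disjoint. No amount of residue bookkeeping will turn a sampling formula at $\sigma_+$ into one at $\sigma$. The paper does not use the sampling identity here at all: it brings in the \emph{minus} side through the function $Q(z)=E_-^\#(z)/E_-(z)$ and applies de Branges' Theorem~32, which computes the norm via the boundary measure of $\I(E_++E_+^\#Q)/(E_+-E_+^\#Q)$. Because this combination equals $(\Psi_-\Psi_++z^2\Phi_-\Phi_+)/(zW)$, its poles are exactly at $\sigma\cup\{0\}$, and the coupling relations $\Phi_-(\lambda)=\eta(\lambda)\Phi_+(\lambda)$ and $|\Psi_-(\lambda)|=|\eta(\lambda)\Psi_+(\lambda)|$ then produce the weights $|\eta(\lambda)|/|\lambda W'(\lambda)|$. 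After that, the two spaces $\mathcal{B}(E_1)\subseteq\mathcal{B}(E_2)$ and the inequalities in~(iii) are obtained by a further rotation/shear governed by the rank of the matrix limit $\Omega=\lim_{y\to\infty}M(\I y)/(\I y)$, which is again absent from your outline.
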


\begin{proof}
Under the imposed conditions, all zeros of the functions $\Phi_-$ and $\Phi_+$ are simple. 
  In fact, if some $\lambda$ was a multiple zero of $\Phi_-$ or $\Phi_+$, then $\lambda$ would have to be a zero of the function $W$  as well since the function in~\eqref{eqnGHN} is a Herglotz--Nevanlinna function. 
  As this means that $\lambda$ belongs to the set $\sigma$, the coupling condition would then imply that $\lambda$ is a zero of both functions, $\Phi_-$ and $\Phi_+$, so that the function in the numerator of~\eqref{eqnGHN} would have a zero of order greater than two at $\lambda$, which constitutes a contradiction. 
  
 Let us denote with $\sigma_\pm$ the set of zeros of the entire function $\Phi_\pm$. 
 Due to the integral representation for Herglotz--Nevanlinna functions, we may write 
 \begin{align}\label{eqnGHNinv}
   -\frac{W(z)}{z\Phi_-(z)\Phi_+(z)} & = \alpha + \beta z - \frac{1}{z} +  \sum_{\lambda\in\sigma_-\cup\sigma_+} \frac{z}{\lambda(\lambda-z)} \gamma_\lambda, \quad z\in\C\backslash\R,
 \end{align}
 with some $\alpha\in\R$, $\beta\geq0$ and $\gamma_\lambda\geq0$ for every $\lambda\in\sigma_-\cup\sigma_+$  such that the sum 
 \begin{align*}
   \sum_{\lambda\in\sigma_-\cup\sigma_+} \frac{\gamma_\lambda}{\lambda^2} 
 \end{align*}
 is finite. 
 Since each $\lambda\in\sigma_-\cup\sigma_+$ is indeed a simple pole %(just note that $\Phi_-(\lambda)=W(\lambda)=0$ if and only if $\Phi_+(\lambda)=W(\lambda)=0$) 
 of the function on the left-hand side of~\eqref{eqnGHNinv}, one sees that the quantities $\gamma_\lambda$ are actually positive.  
  Now we introduce the Herglotz--Nevanlinna function $m_\pm$ by 
  \begin{align*}
    m_\pm(z) = \alpha_\pm + \beta_\pm z - \frac{1}{2z}  & +  \sum_{\lambda\in\sigma_-\cup\sigma_+} \frac{z}{\lambda(\lambda-z)} c_{\lambda,\pm} \gamma_\lambda, \quad z\in\C\backslash\R,
  \end{align*}
  where we choose $\alpha_- = \alpha$, $\alpha_+=0$, $\beta_-=\beta$, $\beta_+=0$ 
  and the quantities $c_{\lambda,\pm}\geq0$ are given by $c_{\lambda,\pm}=1$ if $\lambda\in\sigma_\pm\backslash\sigma_\mp$, $c_{\lambda,\pm}=0$ if $\lambda\in\sigma_\mp\backslash\sigma_\pm$ and 
  \begin{align*}
    c_{\lambda,\pm}^{-1} = 1+\biggl|\frac{\eta(\lambda)\Phi_+'(\lambda)}{\Phi_-'(\lambda)}\biggr|^{\pm1}
  \end{align*} 
  if $\lambda\in\sigma_-\cap\sigma_+$. 
  As a consequence of this definition, one clearly has   
  \begin{align}\label{eqnGHNinvasms}
    -\frac{W(z)}{z\Phi_-(z)\Phi_+(z)} = m_-(z) + m_+(z), \quad z\in\C\backslash\R. 
  \end{align}
  
  Since the set of nonzero poles of the function $m_\pm$ is precisely $\sigma_\pm$, we may define the real entire function $\Psi_\pm$ of exponential type zero  via 
  \begin{align*}
    \Psi_\pm(z) & = \pm z \Phi_\pm(z) m_\pm(z), \quad z\in\C\backslash\R. 
  \end{align*}
  From the identity in~\eqref{eqnGHNinvasms}, we first infer that   
    \begin{align}\label{eqnWasPP}
   \Phi_+(z) \Psi_-(z) -  \Psi_+(z) \Phi_-(z) & = W(z), \quad z\in\C,
 \end{align}
  by simply plugging in the definition of $\Psi_-$ and $\Psi_+$. 
  Moreover, one verifies that 
  \begin{align}\label{eqnPsicoup}
     \begin{aligned} |\Psi_-(\lambda)| & = |\eta(\lambda)\Psi_+(\lambda)|, \\ \Psi_-(\lambda) & = \eta(\lambda) \Psi_+(\lambda),   \end{aligned} & \begin{aligned} \quad\lambda & \in\sigma, \\  \lambda & \in\sigma\backslash(\sigma_-\cap\sigma_+), \end{aligned}
  \end{align}
  in a straightforward manner, that  the function 
  \begin{align*}
    \frac{\Psi_-(z)\Psi_+(z)}{zW(z)}, \quad z\in\C\backslash\R,
  \end{align*}
  is a Herglotz--Nevanlinna function by using~\eqref{eqnWasPP} and the  normalization
  \begin{align*}
    \Psi_\pm(0) = \lim_{z\rightarrow0} \pm z\Phi_\pm(z)m_\pm(z) = \mp \frac{1}{2}.
  \end{align*}
  
 Because the function $m_\pm$ is a non-constant Herglotz--Nevanlinna function, the entire function $E_\pm$ given by  
 \begin{align*}
  E_\pm(z) = \Psi_\pm(z) \pm z\Phi_\pm(z)\I, \quad z\in\C,
 \end{align*}
 is a de Branges function of exponential type zero. 
 Furthermore, the function $E_\pm$ does not have any real roots since otherwise the functions $\Phi_\pm$ and $\Psi_\pm$ would have a common zero which is impossible by definition. % $m_\pm$ has a genuine pole at every zero of $\Phi_\pm$ and all zeros of $\Phi_\pm$ are simple. 
 If $K_\pm$ denotes the reproducing kernel in the corresponding de Branges space $\mathcal{B}(E_\pm)$, then we have 
 \begin{align*}
   K_\pm(0,z) =\frac{\Phi_\pm(z)}{2\pi}, \quad z\in\C. 
 \end{align*}

Next, we introduce the matrix-valued Herglotz--Nevanlinna function $M$ by 
 \begin{align*}
   M(z) =   \frac{-1}{m_-(z)+m_+(z)} \begin{pmatrix} 2 &  m_-(z)-m_+(z)  \\  m_-(z)-m_+(z) &  -2m_-(z)m_+(z) \end{pmatrix}, \quad z\in\C\backslash\R.
 \end{align*}
 For such a function (see \cite[Theorem~5.4]{gets00} for example), the limit 
 \begin{align}\label{eqnOmegaDef}
   \Omega = \lim_{y\rightarrow\infty} \frac{M(\I y)}{\I y}  
 \end{align}
 exists and is a non-negative matrix. 
 Apart from this, the matrix $\Omega$ is symmetric by definition, which implies that all its entries are real. 
 Since the determinant of the matrix $M(z)$ is equal to minus one for all $z$ in the upper half-plane, we have 
 \begin{align*}
   \det{\Omega} =  \lim_{y\rightarrow\infty} \frac{\det{M(\I y)}}{-y^2}  = 0.
 \end{align*}
 Thus, we may conclude that the rank of the matrix $\Omega$ is at most one.  
 
 Let us first suppose that the matrix $\Omega$ is the null matrix, which entails that
  \begin{align*}
   \frac{\Psi_-(\I y)\Psi_+(\I y) - y^2 \Phi_-(\I y)\Phi_+(\I y)}{\I yW(\I y)} = \frac{\tr\, M(\I y)}{2} = \oo(y), \qquad y\rightarrow\infty.
 \end{align*}
 Due to the integral representation for Herglotz--Nevanlinna functions, we thus have 
 \begin{align*}
   \I\frac{E_+(z) + E_+^\#(z)Q(z)}{E_+(z) - E_+^\#(z)Q(z)} & = \frac{\Psi_-(z)\Psi_+(z) + z^2 \Phi_-(z)\Phi_+(z)}{zW(z)}  \\ 
      & = r - \frac{1}{4z} + \sum_{\lambda\in\sigma} \frac{z}{\lambda(\lambda-z)} \frac{|\Psi_-(\lambda)\Psi_+(\lambda)| + |\lambda^2 \Phi_-(\lambda)\Phi_+(\lambda)|}{|\lambda W'(\lambda)|}
     \end{align*}
 for some $r\in\R$ and all $z$ in the open upper half-plane, where $Q$ is given by 
 \begin{align*}
   Q(z) = \frac{E_-^\#(z)}{E_-(z)}.
 \end{align*}
 Upon taking the coupling condition and~\eqref{eqnPsicoup} into account, we further compute 
 \begin{align*}
   \re\frac{E_+(z) + E_+^\#(z)Q(z)}{E_+(z) - E_+^\#(z)Q(z)} = \frac{\im(z)}{4|z|^2} + \sum_{\lambda\in\sigma} \frac{\im(z)}{|\lambda-z|^2} \frac{|\eta(\lambda)|}{|\lambda W'(\lambda)|} |E_+(\lambda)|^2, \quad \im(z)>0.  
 \end{align*} 
 It now follows from \cite[Theorem~32]{dB68}, that for every function $F\in\mathcal{B}(E_+)$ one has 
 \begin{align*}
   \|F\|_{\mathcal{B}(E_+)}^2 = \pi |F(0)|^2  + \pi \sum_{\lambda\in\sigma} |F(\lambda)|^2  \frac{|\eta(\lambda)|}{|\lambda W'(\lambda)|},
 \end{align*}
 that is, the de Branges space $\mathcal{B}(E_+)$ is isometrically embedded in the space $L^2(\R;\mu)$. 
 Upon choosing $E_1=E_2=E_+$ if the function $\Phi_+$ is not a constant and $E_0=E_+$ otherwise, one readily verifies the claimed properties in this case. 
 
 If the matrix $\Omega$ has rank one, then there is a $\varphi\in[0,2\pi)$ and a $\kappa>0$ such that 
 \begin{align}\label{eqnOmegark1}
   \begin{pmatrix}  \cos\varphi & \sin\varphi \\ -\sin\varphi & \cos\varphi \end{pmatrix} \Omega  \begin{pmatrix}  \cos\varphi & -\sin\varphi \\ \sin\varphi & \cos\varphi \end{pmatrix} = \begin{pmatrix} \kappa & 0 \\ 0 & 0 \end{pmatrix}.
 \end{align}
 We now introduce the real entire functions $A_\pm$ and $B_\pm$ of exponential type zero via 
 \begin{align*}
   \begin{pmatrix} A_\pm(z) \\ B_\pm(z) \end{pmatrix} =  \begin{pmatrix}  \cos\varphi & \pm\sin\varphi \\ \mp\sin\varphi & \cos\varphi \end{pmatrix}  \begin{pmatrix} \Psi_\pm(z) \\ \pm z\Phi_\pm(z) \end{pmatrix}, \quad z\in\C.
 \end{align*}
 In view of \cite[Theorem~34]{dB68}, % \cite[Theorem~I]{dB60}
 the entire function given by 
 \begin{align*}
    A_\pm(z) + B_\pm(z)\I, \quad z\in\C,
 \end{align*}
 is a de Branges function of exponential type zero without real roots and such that the associated de Branges space coincides with $\mathcal{B}(E_\pm)$ isometrically. 
 This also guarantees that the quotient $A_\pm/B_\pm$ is a non-constant Herglotz--Nevanlinna function. 
 Furthermore, one readily sees that 
 \begin{align*}
   A_+(z) B_-(z) + B_+(z) A_-(z) = zW(z), \quad z\in\C, 
 \end{align*}
 as well as the identity 
 \begin{align*}
  \frac{2}{zW(z)} & \begin{pmatrix} -B_-(z)B_+(z) & \ast \\ \ast & A_-(z)A_+(z) \end{pmatrix} \\ & \qquad\qquad =    \begin{pmatrix}  \cos\varphi & \sin\varphi \\ -\sin\varphi & \cos\varphi \end{pmatrix} M(z)  \begin{pmatrix}  \cos\varphi & -\sin\varphi \\ \sin\varphi & \cos\varphi \end{pmatrix}, \quad z\in\C\backslash\R.
 \end{align*}
 In conjunction with~\eqref{eqnOmegaDef} and~\eqref{eqnOmegark1}, we infer that  
 \begin{align*}
  \lim_{y\rightarrow\infty}  \frac{1}{\I y} \frac{-B_-(\I y)B_+(\I y)}{\I y W(\I y)} & = \frac{\kappa}{2}, & \lim_{y\rightarrow\infty} \frac{1}{\I y} \frac{A_-(\I y)A_+(\I y)}{\I y W(\I y)} & = 0. 
 \end{align*}
 From this we may deduce that the limits % see proof in notes 
 \begin{align}\label{eqnxis}
   \xi_2 & = \lim_{y\rightarrow\infty} -\frac{1}{\I y} \frac{B_-(\I y)}{A_-(\I y)}, & \xi_1 & = \lim_{y\rightarrow\infty} - \frac{1}{\I y} \frac{B_+(\I y)}{A_+(\I y)},
 \end{align}
 exist and are positive.
 Next, we define the real entire functions $A_{j,\pm}$ and $B_{j,\pm}$ by 
 \begin{align*}
   \begin{pmatrix} A_{j,\pm}(z) \\ B_{j,\pm}(z) \end{pmatrix} = \begin{pmatrix} 1 & 0 \\ \mp (-1)^{j} \xi_j z & 1 \end{pmatrix} \begin{pmatrix} A_\pm(z) \\ B_\pm(z) \end{pmatrix}, \quad z\in\C, ~j=1,2,
 \end{align*}
 so that the functions $m_{j,\pm}$ given by 
 \begin{align*}
   m_{j,\pm}(z) = -\frac{B_{j,\pm}(z)}{A_{j,\pm}(z)}, \quad z\in\C\backslash\R,~j=1,2,
 \end{align*}
 are Herglotz--Nevanlinna functions that satisfy 
  \begin{align*}
   m_{1,-}(\I y) & \sim (\xi_1+\xi_2)\I y, & m_{1,+}(\I y) & = \oo(y), \\  m_{2,-}(\I y) & = \oo(y), & m_{2,+}(\I y) & \sim (\xi_1+\xi_2)\I y,
 \end{align*}
 as $y\rightarrow\infty$. 
 As a consequence, we may conclude that 
 \begin{align}\label{eqntratra}
   \frac{A_{j,-}(\I y)A_{j,+}(\I y) - B_{j,-}(\I y) B_{j,+}(\I y)}{\I y W(\I y)} = \oo(y), \qquad y\rightarrow\infty, ~j=1,2. 
 \end{align}
 
 In order to finish the proof, let us first suppose that the function $\Phi_+$ is not constant. 
 As then the function $m_+$ has at least two poles, we may infer that the Herglotz--Nevanlinna function $m_{1,+}$ is not constant. 
% \begin{align*}
%   m_{1,+}(z)  = - \frac{B_+(z)}{A_+(z)} - \xi_1 z = - \frac{\cos\varphi - \sin\varphi m_+(z)}{\sin\varphi + \cos\varphi m_+(z)} - \xi_1 z
% \end{align*}
% Indeed, if $m_{1,+}$ was constant, then 
% \begin{align*}
%   \cos\varphi - \sin\varphi m_+(z) = (C-\xi_1 z) (\sin\varphi + \cos\varphi m_+(z)),
% \end{align*}
% which shows that $m_+$ would have only one pole. 
 Since the same holds for $m_{2,+}$ in any case, we see that the entire functions $E_1$ and $E_2$ given by 
 \begin{align*}
   E_j(z) = A_{j,+}(z) + B_{j,+}(z)\I, \quad z\in\C,~j=1,2,
 \end{align*}
 are de Branges functions of exponential type zero without real roots. 
 Furthermore, the analytic functions $Q_1$ and $Q_2$ defined by 
 \begin{align*}
   Q_j(z) = \frac{A_{j,-}(z) - B_{j,-}(z)\I}{A_{j,-}(z) + B_{j,-}(z)\I}, \quad \im(z)>0,~j=1,2,
 \end{align*}
 are  bounded by one on the upper half-plane because the functions $m_{1,-}$ and $m_{2,-}$ are Herglotz--Nevanlinna functions.
 Due to the integral representation for Herglotz--Nevanlinna functions and~\eqref{eqntratra} we may write 
  \begin{align*}
   & \I\frac{E_{j}(z) + E_{j}^\#(z)Q_j(z)}{E_{j}(z) - E_{j}^\#(z)Q_j(z)} = \frac{A_{j,-}(z)A_{j,+}(z) + B_{j,-}(z)B_{j,+}(z)}{zW(z)}  \\ 
      & \qquad\qquad\qquad = s - \frac{1}{4z} + \sum_{\lambda\in\sigma} \frac{z}{\lambda(\lambda-z)} \frac{|A_{j,-}(\lambda)A_{j,+}(\lambda)| + |B_{j,-}(\lambda)B_{j,+}(\lambda)|}{|\lambda W'(\lambda)|}
     \end{align*}
 for some $s\in\R$, all $z$ in the open upper half-plane and $j=1,2$.
 Upon noticing that 
 \begin{align*}
   |A_{j,-}(\lambda)| & = |\eta(\lambda) A_{j,+}(\lambda)|, & |B_{j,-}(\lambda)| & = |\eta(\lambda) B_{j,+}(\lambda)|, \quad \lambda\in\sigma,~j=1,2,
 \end{align*}
 which follows from the coupling condition and~\eqref{eqnPsicoup}, we conclude that 
  \begin{align*}
   \re\frac{E_{j}(z) + E_{j}^\#(z)Q_j(z)}{E_{j}(z) - E_{j}^\#(z)Q_j(z)} = \frac{\im(z)}{4|z|^2} + \sum_{\lambda\in\sigma} \frac{\im(z)}{|\lambda-z|^2} \frac{|\eta(\lambda)|}{|\lambda W'(\lambda)|} |E_{j}(\lambda)|^2, \quad \im(z)>0.  
 \end{align*} 
  In view of \cite[Theorem~32]{dB68}, we see that the de Branges spaces $\mathcal{B}(E_1)$ and $\mathcal{B}(E_2)$ are isometrically embedded in the space $L^2(\R;\mu)$. The third item in the claim follows from the identity 
  \begin{align*}
   K_j(0,z) = K_+(0,z) - \frac{(-1)^j \xi_j\cos\varphi}{2\pi} A_+(z), \quad z\in\C,~j=1,2.
  \end{align*}
  The space $\mathcal{B}(E_1)$ is a closed subspace of $\mathcal{B}(E_2)$ with codimension at most one because (see also \cite[Theorem~33 and Theorem~34]{dB68}) we have 
  \begin{align*}
    \begin{pmatrix} A_{2,+}(z) \\ B_{2,+}(z) \end{pmatrix} = \begin{pmatrix} 1 & 0 \\ -(\xi_1+\xi_2)z & 1 \end{pmatrix} \begin{pmatrix} A_{1,+}(z) \\ B_{1,+}(z) \end{pmatrix}, \quad z\in\C, 
  \end{align*}
  and therefore the corresponding reproducing kernels are related by 
  \begin{align*}
    K_2(\zeta,z) = K_1(\zeta,z) + \frac{\xi_1+\xi_2}{\pi} A_+(z) A_+(\zeta^\ast), \quad z,\,\zeta\in\C.
  \end{align*}
  The left properties in the fourth item are readily verified upon observing that the function $A_+$ in $\mathcal{B}(E_2)$ is orthogonal to $\mathcal{B}(E_1)$ in view of  \cite[Theorem~33]{dB68} and does not vanish at zero since positivity of the second limit in~\eqref{eqnxis} would contradict the definition of $m_+$ in this case.
  It remains to note that the required normalization can be achieved by redefining $E_j$ through 
  \begin{align*}
    \begin{pmatrix} 1 & \I \end{pmatrix} \begin{pmatrix} \cos\varphi & -\sin\varphi \\ \sin\varphi & \cos\varphi \end{pmatrix} \begin{pmatrix} A_{j,+}(z) \\ B_{j,+}(z) \end{pmatrix}, \quad z\in\C,~j=1,2,
  \end{align*}
  which leaves the corresponding de Branges space unchanged \cite[Theorem~34]{dB68}. % \cite[Theorem~I]{dB60} 
  
  Otherwise, if the function $\Phi_+$ is constant, then positivity of the second limit in~\eqref{eqnxis} shows that $\sin\varphi$ is necessarily equal to zero. 
  Then the function $E_0$ given by 
  \begin{align*}
    E_0(z) = A_{2,+}(z) + B_{2,+}(z)\I, \quad z\in\C, 
%      = -\frac{1}{2} + z(\frac{\xi_2}{2}+1)\I,
  \end{align*}
  is a polynomial de Branges function % $m_{2,+}$ is non-constant
  of degree one without real roots.
  It follows as in the non-constant case above that the associated de Branges space $\mathcal{B}(E_0)$ is isometrically embedded in $L^2(\R;\mu)$. 
  Finally, observing that 
  \begin{align*}
    2\pi K_0(0,z) = 1 + \frac{\xi_2}{2}, \quad z\in\C,
  \end{align*}
  readily yields the remaining claims. 
\end{proof}

This auxiliary result in conjunction with a variant of de Branges' subspace ordering theorem \cite{ko76} allows us to verify the uniqueness part of our theorem.

\begin{proof}[Proof of uniqueness]
Let us for now suppose that the coupling constants $\eta\in\hat{\R}^\sigma$ are such that $\eta(\lambda)$ is finite and non-zero for every $\lambda\in\sigma$. 
We are going to show that any two solutions, say $(\Phi^\times_{-},\Phi^\times_{+})$ and $(\Phi^\circ_{-},\Phi^\circ_{+})$,  of the coupling problem with data~$\eta$ actually coincide. 
To this end, we first note that it suffices to verify that the functions $\Phi^\times_+$ and $\Phi^\circ_+$ are equal. 
In fact, in this case we may conclude from the integral representation for Herglotz--Nevanlinna functions that 
\begin{align*}
  \frac{z\Phi_-^\times(z)\Phi_+^\times(z)}{W(z)} = \frac{z\Phi_-^\circ(z)\Phi_+^\circ(z)}{W(z)}, \quad z\in\C\backslash\sigma,
\end{align*}
since the residues of both functions (due to the coupling condition) as well as their behavior at zero (due to the normalization) are the same, which guarantees that the functions $\Phi_-^\times$ and $\Phi_-^\circ$ coincide too. 
We distinguish the following three cases:

{\em Case 1; the functions $\Phi_+^\times$ and $\Phi_+^\circ$ are both constant.}
 The claim is obvious under these conditions since both functions are equal to one. 

{\em Case 2; precisely one of the functions $\Phi_+^\times$ and $\Phi_+^\circ$ is constant.} 
Without loss of generality, we may assume that $\Phi_+^\times$ is constant but $\Phi_+^\circ$ is not.
Let $E_0^\times$ and $E_1^\circ$ denote the corresponding de Branges functions from Lemma~\ref{lem1}. 
Since the associated de Branges spaces are both isometrically embedded in the same space $L^2(\R;\mu)$, we infer from the theorem in \cite{ko76} that either $\mathcal{B}(E_0^\times)\subseteq\mathcal{B}(E_1^\circ)$ or $\mathcal{B}(E_1^\circ)\subsetneq\mathcal{B}(E_0^\times)$. 
As the space $\mathcal{B}(E_0^\times)$ is one-dimensional, it is impossible that $\mathcal{B}(E_1^\circ)$ is a proper subspace of $\mathcal{B}(E_0^\times)$ and we conclude that $\mathcal{B}(E_0^\times)\subseteq\mathcal{B}(E_1^\circ)$. 
It follows from \cite[Theorem~33]{dB68} that there are real entire functions $\alpha$, $\beta$, $\gamma$, $\delta$ with $\alpha(0)=\delta(0) =1$ and $\beta(0)=0$ (due to the normalization of our de Branges functions) as well as
\begin{align}\label{eqnDetone}
 \alpha(z) \delta(z) - \beta(z)\gamma(z) = 1, \quad z\in\C, 
\end{align}
such that (see also \cite[Section~1]{krla14}) the quotient $\beta/\alpha$ is a Herglotz--Nevanlinna function  and  the corresponding reproducing kernels satisfy
\begin{align*}
   2\pi K_1^\circ(0,z) z = 2\pi K_0^\times(0,z) z - A_0^\times(z) \beta(z) + B_0^\times(z) (\delta(z)-1), \quad z\in\C,
\end{align*}
where $A_0^\times$ and $B_0^\times$ are real entire functions such that $E_0^\times=A_0^\times + B_0^\times \I$. 
Differentiating with respect to $z$ and evaluating at zero then gives 
\begin{align*}
 2\pi K_1^\circ(0,0) = 2\pi K_0^\times(0,0) + \frac{\beta'(0)}{2}.
\end{align*}
 Because Lemma~\ref{lem1} and the inclusion $\mathcal{B}(E_0^\times)\subseteq\mathcal{B}(E_1^\circ)$ guarantee the inequality 
\begin{align*}
 1 \leq 2\pi K_0^\times(0,0) \leq 2\pi K_1^\circ(0,0) \leq 1
\end{align*}
on the other side, we see that $\beta'(0)=0$. 
 As this means that the Herglotz--Nevanlinna function $\beta/\alpha$ has a multiple root at zero, we may conclude that $\beta$ vanishes identically. 
 Due to~\eqref{eqnDetone}, this also shows that $\delta$ has no zeros at all and thus is identically equal to one (since it is of Cartwright class \cite[Proposition~1.1]{krla14}).
 In conjunction with the remaining properties of the kernels in Lemma~\ref{lem1}, we thus get     
\begin{align*}
 \Phi_+^\times(z) = 2\pi K_0^\times(0,z) = 2\pi K_1^\circ(0,z) = \Phi_+^\circ(z), \quad z\in\C.
\end{align*}

{\em Case 3; neither of the functions $\Phi_+^\times$ and $\Phi_+^\circ$ is constant.}
Let us denote with $E_1^\times$, $E_2^\times$ and $E_1^\circ$, $E_2^\circ$ the respective corresponding de Branges functions from Lemma~\ref{lem1}. 
Since the associated de Branges spaces are all isometrically embedded in the same space $L^2(\R;\mu)$, we see from the theorem in \cite{ko76} that they are totally ordered. 
If one of the inclusions, $\mathcal{B}(E_2^\times)\subseteq\mathcal{B}(E_1^\circ)$ or $\mathcal{B}(E_2^\circ)\subseteq\mathcal{B}(E_1^\times)$, holds, then we may deduce that the functions $\Phi_+^\times$ and $\Phi_+^\circ$ are equal  by literally following the lines of the argument in the previous case. 
For this reason, it remains to verify the claim when $\mathcal{B}(E_1^\circ)\subsetneq\mathcal{B}(E_2^\times)$ and $\mathcal{B}(E_1^\times)\subsetneq\mathcal{B}(E_2^\circ)$. 
Because $\mathcal{B}(E_1^\circ)$ has codimension at most one in $\mathcal{B}(E_2^\circ)$, we see that $\mathcal{B}(E_2^\circ)\subseteq\mathcal{B}(E_2^\times)$ and analogously also $\mathcal{B}(E_2^\times)\subseteq\mathcal{B}(E_2^\circ)$, which results in  $\mathcal{B}(E_2^\times)=\mathcal{B}(E_2^\circ)$. 
After a similar argument, we furthermore see that $\mathcal{B}(E_1^\times)=\mathcal{B}(E_1^\circ)$ as well.
Now the claim follows from the properties of the corresponding reproducing kernels in Lemma~\ref{lem1}.

\smallskip

In order to prove uniqueness also under general assumptions, let $\eta\in\hat{\R}^\sigma$ be arbitrary and consider two solutions $(\Phi_-^\times,\Phi_+^\times)$ and $(\Phi_-^\circ,\Phi_+^\circ)$ of the coupling problem with data $\eta$. 
We first define the sets 
\begin{align*}
  \sigma_- & = \{\lambda\in\sigma\,|\, \eta(\lambda)=0\}, &  \sigma_+ & = \{\lambda\in\sigma\,|\, \eta(\lambda)=\infty\}, & \tilde{\sigma} & = \sigma\backslash(\sigma_+\cup\sigma_-),
\end{align*} 
 as well as the entire function $\tilde{W}$ by 
\begin{align*}
  \tilde{W}(z) = \prod_{\lambda\in\tilde{\sigma}} \biggl(1-\frac{z}{\lambda}\biggr), \quad z\in\C,
\end{align*}
and the sequence $\tilde{\eta}\in\hat{\R}^{\tilde{\sigma}}$ via 
\begin{align*}
  \tilde{\eta}(\lambda)  = \eta(\lambda) \prod_{\kappa\in\sigma_-} \biggl(1-\frac{\lambda}{\kappa}\biggr)^{-1} \prod_{\kappa\in\sigma_+} \biggl(1-\frac{\lambda}{\kappa}\biggr), \quad\lambda\in\tilde{\sigma}. 
\end{align*}
Then for any $\diamond\in\{\times,\circ\}$, the pair of real entire functions $(\tilde{\Phi}_-^\diamond,\tilde{\Phi}_+^\diamond)$ of exponential type zero defined such that  
\begin{align*}
 \tilde{\Phi}_\pm^\diamond(z) \prod_{\kappa\in\sigma_\pm} \biggl(1-\frac{z}{\kappa}\biggr)  = \Phi_\pm^\diamond(z), \quad z\in\C,
\end{align*}
satisfies first of all the coupling condition 
\begin{align*}
 \tilde{\Phi}_-^\diamond(\lambda) = \tilde{\eta}(\lambda) \tilde{\Phi}_+^\diamond(\lambda), \quad\lambda\in\tilde{\sigma}.
\end{align*}
 Furthermore, we readily see that the function 
 \begin{align*}
  \frac{z\tilde{\Phi}_-^\diamond(z)\tilde{\Phi}_+^\diamond(z)}{\tilde{W}(z)} = \frac{z\Phi_-^\diamond(z)\Phi_+^\diamond(z)}{W(z)}, \quad z\in\C\backslash\R,
\end{align*}
is a Herglotz--Nevanlinna function as well as the normalization
\begin{align*} 
\tilde{\Phi}_-^\diamond(0)=\tilde{\Phi}_+^\diamond(0)=1.
\end{align*}
In other words, the pairs $(\tilde{\Phi}_-^\times,\tilde{\Phi}_+^\times)$ and $(\tilde{\Phi}_-^\circ,\tilde{\Phi}_+^\circ)$ are solutions of the coupling problem with data $\tilde{\eta}$ when the set $\sigma$ is replaced with $\tilde{\sigma}$.
Since $\tilde{\eta}(\lambda)$ is finite and non-zero for every $\lambda\in\tilde{\sigma}$, we may invoke the first part of the proof to infer that 
\begin{align*}
  \Phi_\pm^\times(z) = \tilde{\Phi}_\pm^\times(z)  \prod_{\kappa\in\sigma_\pm} \biggl(1-\frac{z}{\kappa}\biggr) = \tilde{\Phi}_\pm^\circ(z)  \prod_{\kappa\in\sigma_\pm} \biggl(1-\frac{z}{\kappa}\biggr)   = \Phi_\pm^\circ(z), \quad z\in\C.
\end{align*}
This shows that solutions to the coupling problem are always unique. 
\end{proof}

We will require the following useful fact about rational Herglotz--Nevanlinna functions in order to establish the existence of solutions to the coupling problem. 

\begin{lemma}\label{lem2}
 If $m$ is a rational Herglotz--Nevanlinna function with a pole at zero, then there is an $N\in\N$, positive constants $l_1,\ldots,l_{N}$, real constants $\omega_1,\ldots,\omega_N$ and non-negative real constants $\upsilon_1,\ldots,\upsilon_N$ such that  
  \begin{align*}
   m(z) = \frac{p_N(z)}{q_N(z)}, \quad z\in\C\backslash\R,
 \end{align*}
 where the polynomials $p_0,\ldots,p_N$ and $q_0,\ldots,q_N$ are defined recursively via 
 \begin{align}\label{eqnpqrec}
   \begin{aligned} q_0(z) & = 0, \\ p_0(z) & = 1, \end{aligned} 
 && \begin{aligned} q_n(z) & = q_{n-1}(z) - l_n z p_{n-1}(z), \\ p_n(z) & = p_{n-1}(z) + (\omega_n+\upsilon_n z)q_n(z), \end{aligned}
  \end{align}
 for all $z\in\C$ and $n=1,\ldots,N$. 
\end{lemma}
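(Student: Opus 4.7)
The plan is to argue by induction on the rational order of $m$ (that is, $\max(\deg A,\deg B)$ for $m=A/B$ in lowest terms, equivalently the number of poles in $\overline{\C}$ counted with multiplicities), viewing the recursion~\eqref{eqnpqrec} as a recipe to peel off $m$ from the outside in. Eliminating the polynomials $p_n$ and $q_n$ from~\eqref{eqnpqrec} shows that the quotient $F_n := p_n/q_n$ satisfies
\[
 F_n(z) = \omega_n + \upsilon_n z + \cfrac{1}{\cfrac{1}{F_{n-1}(z)} - l_n z}, \qquad F_0 \equiv \infty,
\]
so the task is to produce parameters with the required signs such that $m = F_N$.

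First I would peel off the behaviour of $m$ at infinity. The integral representation for Herglotz--Nevanlinna functions together with the rationality of $m$ guarantees that the limits
\[
 \upsilon_N := \lim_{y\to\infty} \frac{m(\I y)}{\I y} \geq 0, \qquad \omega_N := \lim_{y\to\infty} \re m(\I y) \in \R
\]
exist, and that $m_1(z) := m(z) - \omega_N - \upsilon_N z$ is again rational Herglotz--Nevanlinna: subtracting $\upsilon_N z$ removes precisely the $\beta z$-term in the integral representation, while subtracting the real constant $\omega_N$ preserves the Herglotz property. By construction $m_1$ still has a pole at zero and now vanishes at infinity, so it admits a Cauchy representation
\[
 m_1(z) = \int_\R \frac{d\mu(\lambda)}{\lambda - z}
\]
for a finite positive Borel measure $\mu$ with atom $c_0 := \mu(\{0\}) > 0$ at the origin. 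If $\mu$ is supported at zero only, then $m_1(z) = -c_0/z$ and taking $N = 1$ and $l_1 := 1/c_0$ finishes the argument; this is the base case.

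For the inductive step, when $\mu$ has mass away from zero so that $\mu(\R) > c_0$, I would set
\[
 l_N := \frac{1}{\mu(\R)} \in (0, 1/c_0), \qquad F_{N-1}(z) := \frac{1}{\frac{1}{m_1(z)} + l_N z},
\]
which makes $F_N = m$ automatic, and then verify that $F_{N-1}$ is once again rational Herglotz--Nevanlinna with a pole at zero but of strictly smaller rational order than $m$. The Herglotz property for $F_{N-1}$ follows because $-1/m_1$ is itself Herglotz with linear coefficient at infinity equal to $1/\mu(\R) = l_N$ (read off from $m_1(z) \sim -\mu(\R)/z$), so $-1/F_{N-1} = -1/m_1 - l_N z$ is that Herglotz function with its $\beta z$-term subtracted, hence still Herglotz. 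The pole at zero persists because $1/m_1(z) + l_N z = (l_N - 1/c_0)z + O(z^2)$ has a simple zero at the origin with $l_N - 1/c_0 < 0$, producing a simple pole of $F_{N-1}$ with negative residue.

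The heart of the argument is the order reduction, and here the specific value $l_N = 1/\mu(\R)$ is crucial. Writing $m_1 = A/B$ in lowest terms with $\deg B = \deg A + 1$ and leading-coefficient ratio $\mathrm{LC}(A)/\mathrm{LC}(B) = -\mu(\R)$ (again from $m_1(z) \sim -\mu(\R)/z$), this choice of $l_N$ cancels the top coefficient of $B + l_N z A$ exactly, so that $F_{N-1} = A/(B + l_N z A)$ has strictly smaller rational order than $m_1$, and hence than $m$. In other words, $l_N = 1/\mu(\R)$ is forced as the unique value that both keeps the Herglotz inequality sharp at infinity and produces the top-degree cancellation required for the induction to terminate. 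Once these three properties are in place, the induction hypothesis supplies parameters for $F_{N-1}$, and prepending the triple $(l_N, \omega_N, \upsilon_N)$ to them yields the desired continued fraction representation of $m$.
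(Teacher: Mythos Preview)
Your argument is correct and is essentially the same induction as the paper's: peel off the affine part $\omega_N+\upsilon_N z$, invert, peel off the linear part $l_N z$, invert again, and apply the induction hypothesis. The only cosmetic differences are that the paper inducts on the number of (necessarily simple) poles in $\C$ rather than on the rational order, and states the existence of the decomposition $-1/m_0=\gamma z+m_1$ abstractly rather than identifying $\gamma=l_N=1/\mu(\R)$ via the integral representation and verifying the degree drop by explicit polynomial cancellation as you do.
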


\begin{proof}
  If the function $m$ has precisely one pole, then it admits the representation 
 \begin{align*}
   m(z) = \alpha + \beta z - \frac{1}{\gamma z}, \quad z\in\C\backslash\R, 
 \end{align*}
 for some $\alpha$, $\beta$, $\gamma\in\R$ with $\beta\geq0$ and $\gamma>0$. 
 Upon setting $N = 1$,  $\omega_1  = \alpha$,  $\upsilon_1  = \beta$ and $l_1  = \gamma$, we readily obtain the claim in this case. 
 Now let $k\in\N$, suppose that the claim holds for all functions with at most $k$ poles and assume that the function $m$ has exactly $k+1$ poles. 
 We still have    
  \begin{align*}
    m(z) = \alpha + \beta z + m_0(z), \quad z\in\C\backslash\R, 
  \end{align*}
  for some $\alpha$, $\beta\in\R$ with $\beta\geq0$ and a rational Herglotz--Nevanlinna function $m_0$ that satisfies $m_0(\I y)=\oo(1)$ as $y\rightarrow\infty$.  
  Since $m_0$ is not identically zero, we may write 
  \begin{align*}
    -\frac{1}{m_0(z)} = \gamma z + m_1(z), \quad z\in\C\backslash\R,
  \end{align*}
  for some positive constant $\gamma$ and a rational Herglotz--Nevanlinna function $m_1$ which satisfies $m_1(\I y)=\OO(1)$ as $y\rightarrow\infty$ and has less poles than $m$. 
  The function $m_1$ does not vanish identically because otherwise the function $m$ would have only one pole.  
  For this reason, the function $m_2$ defined by 
  \begin{align*}
   m_2(z) = - \frac{1}{m_1(z)}, \quad z\in\C\backslash\R,
  \end{align*}
  is a rational Herglotz--Nevanlinna function with a pole at zero but at most $k$ poles altogether.
  Due to our induction hypothesis, there is an $N\in\N$, positive constants $l_1,\ldots,l_{N}$, real constants $\omega_1,\ldots,\omega_N$ and non-negative real constants $\upsilon_1,\ldots,\upsilon_N$ such that  
  \begin{align*}
   m_2(z) = \frac{p_N(z)}{q_N(z)}, \quad z\in\C\backslash\R,
 \end{align*}
 where the polynomials $p_0,\ldots,p_N$ and $q_0,\ldots,q_N$ are given recursively by~\eqref{eqnpqrec}.  
  Upon defining the quantities $l_{N+1}=\gamma$, $\omega_{N+1}=\alpha$ and $\upsilon_{N+1}=\beta$ as well as the polynomials $p_{N+1}$ and $q_{N+1}$ via setting 
  \begin{align*}
    q_{N+1}(z) & = q_{N}(z) - l_{N+1} z p_{N}(z), & p_{N+1}(z) & = p_{N}(z) + (\omega_{N+1}+\upsilon_{N+1} z)q_{N+1}(z),
  \end{align*}
  for all $z\in\C$,   we readily compute that 
   \begin{align*}
     \frac{p_{N+1}(z)}{q_{N+1}(z)} = \omega_{N+1} + \upsilon_{N+1}z + \frac{1}{-l_{N+1}z + m_2(z)^{-1}} = m(z), \quad z\in\C\backslash\R,
   \end{align*}
   which establishes the claimed representation. 
\end{proof}

Put differently, the previous lemma says that every rational Herglotz--Nevanlinna function $m$ with a pole at zero admits a  continued fraction expansion of the form 
 \begin{align*}
   m(z) = \omega_N + \upsilon_N z + \cfrac{1}{-l_{N}z + \cfrac{1}{ \;\ddots\;  + \cfrac{1}{\omega_1 + \upsilon_1 z + \cfrac{1}{-l_{1} z}}}}, \quad z\in\C\backslash\R.
 \end{align*}
% as the polynomials there satisfy the relation
% \begin{align*}
%  \frac{p_n(z)}{q_n(z)} = \omega_n + \upsilon_n z + \frac{1}{-l_n z + \frac{q_{n-1}(z)}{p_{n-1}(z)}}, \quad z\in\C\backslash\R,~n=1,\ldots,N.
%\end{align*}
In turn, any function that can be written as such a continued fraction is a rational Herglotz--Nevanlinna function with a pole at zero. 

\begin{proof}[Proof of existence]
 Let $\eta\in\hat{\R}^\sigma$ be admissible coupling constants. 
 We will establish the existence of solutions to the coupling problem with data $\eta$ in three steps:
 
 {\em Step 1; the coupling problem with data $\eta$ is solvable when $\sigma$ is a finite set and $\eta(\lambda)$ is finite and non-zero for every $\lambda\in\sigma$.}
 Consider the function $m$ defined by 
 \begin{align*}
   m(z) = -\frac{1}{2z} - \frac{1}{2} \sum_{\lambda\in\sigma} \frac{1}{\lambda-z} \frac{\eta(\lambda)}{\lambda W'(\lambda)}, \quad z\in\C\backslash\R.
 \end{align*}
 Due to the admissibility of the coupling constants $\eta$, the function $m$ is a rational Herglotz--Nevanlinna function with a pole at zero. 
 It follows from Lemma~\ref{lem2} that there is an $N\in\N$, positive constants $l_1,\ldots,l_{N}$, real constants $\omega_1,\ldots,\omega_N$ and non-negative real constants $\upsilon_1,\ldots,\upsilon_N$ such that  
  \begin{align*}
   m(z) = \frac{p_N(z)}{q_N(z)}, \quad z\in\C\backslash\R,
 \end{align*}
 where the polynomials $p_0,\ldots,p_N$ and $q_0,\ldots,q_N$ are defined recursively via~\eqref{eqnpqrec}. 
 Because $p_N$ and $q_N$ must not have any common zeros, 
% this follows easily upon noting that 
% \begin{align*}
%   \begin{pmatrix} q_n(z) \\ p_n(z) \end{pmatrix} = \begin{pmatrix} 1 & - l_n z \\ \omega_n + \upsilon_n z & 1- l_n z(\omega_n+\upsilon_n z) \end{pmatrix} \begin{pmatrix} q_{n-1}(z) \\ p_{n-1}(z) \end{pmatrix}, \quad z\in\C,~n=1,\ldots,N,
% \end{align*}
% where the matrix is invertible
 we may conclude that 
% because of this, the zeros of $q_N$ are precisely the poles of $m$ which determines $q_N$ up to a scalar multiple (also note that all zeros are necessarily simple) 
 \begin{align}\label{eqnqNW}
   -q_N(z) = 2zW(z), \quad z\in\C,
 \end{align}
 upon also taking the residue of $m$ at zero and the fact that $p_N(0)=1$ into account. 
 Moreover, by means of evaluating the residue of $m$ at a pole $\lambda\in\sigma$, we get 
 \begin{align}\label{eqncoupatN}
   p_N(\lambda) = q_N'(\lambda)\, \mathrm{res}_\lambda m = - \eta(\lambda), \quad \lambda\in\sigma.
 \end{align}
 We now deduce from the recursion in~\eqref{eqnpqrec} that the quotient $p_n/q_n$ is a non-constant Herglotz--Nevanlinna function for all $n=1,\ldots,N$. 
 Therefore, also the function  
 \begin{align*}
     - \frac{q_n(z)}{p_{n-1}(z)} - l_n z = - \frac{q_{n-1}(z)}{p_{n-1}(z)}, \quad z\in\C\backslash\R., 
 \end{align*} 
 is a Herglotz--Nevanlinna function that is not constant if and only if $n\in\{2,\ldots,N\}$. 
 Next, we define the polynomials $r_0,\ldots,r_N$ and $s_0,\ldots,s_N$ recursively via 
 \begin{align*}
   \begin{aligned} r_N(z) & = -1, \\ s_N(z) & = 0,   \end{aligned} 
 && \begin{aligned} r_{n}(z) & = r_{n+1}(z) - (\omega_{n+1}+\upsilon_{n+1}z)s_{n+1}(z), \\ s_{n}(z) & = s_{n+1}(z) + l_{n+1} z r_{n}(z), \end{aligned}
  \end{align*}
  for all $z\in\C$ and $n=N-1,\ldots,0$. 
  One notes again that the quotient $s_n/r_{n-1}$ is a Herglotz--Nevanlinna function for all $n=N,\ldots,1$. 
  Since both sets of polynomials satisfy the same recursion, we readily compute using~\eqref{eqnqNW} that 
  \begin{align*}
    q_n(z)r_n(z) - p_n(z)s_n(z) = q_N(z)r_N(z) - p_N(z)s_N(z) = 2z W(z), \quad z\in\C,
  \end{align*}
  independent of $n=0,\ldots,N$. 
 Apart from this, we infer that for each $\lambda\in\sigma$ one has 
  \begin{align}\label{eqnpolycoup}
    p_n(\lambda) & = \eta(\lambda) r_n(\lambda), & q_n(\lambda) & = \eta(\lambda) s_n(\lambda), 
  \end{align}
  which is obvious for $n=N$ due to~\eqref{eqncoupatN} and then follows for all $n=N-1,\ldots,0$ by repeatedly employing the recursion relation. 
 % Again, it is simplest to use the recursion relation in matrix form
  Since the sum over all $l_1,\ldots,l_N$ is equal to two, 
%  \begin{align*}
%    q_n(0) & = 0, & p_n(0) & = 1, & q_n'(0) & = - \sum_{i=1}^n l_i, \quad n=0,\ldots,N, \\ 
%    s_n(0) & = 0, & r_n(0) & = -1, & s_n'(0) & = - \sum_{i=n+1}^N l_i, \quad n=0,\ldots,N,
%  \end{align*}
  we may pick an $n_0\in\{1,\ldots,N\}$ such that 
  \begin{align*}
    \sum_{i=1}^{n_0-1} l_i & \leq 1 < \sum_{i=1}^{n_0} l_i, & \delta & :=  \sum_{i=1}^{n_0} l_i -1 \in(0,l_{n_0}].
  \end{align*}
%  Define $n_0$ as the minimum
%  \begin{align*}
%    n_0 = \min\biggl\{ n\in\{1,\ldots,N\} \,|\, 1<\sum_{i=1}^n l_i \biggr\}
%  \end{align*}
%  which exists because the set if finite and contains $N$. 
  With these definitions, we introduce the real polynomials $\Phi_-$ and $\Phi_+$ such that 
  \begin{align*}
    - z \Phi_-(z) & = q_{n_0}(z) + \delta z p_{n_0-1}(z), & -z\Phi_+(z) & = s_{n_0}(z) + \delta z r_{n_0-1}(z),
  \end{align*}
  for all $z\in\C$ and first note that due to~\eqref{eqnpolycoup} we have 
  \begin{align*}
    \Phi_-(\lambda) % = \frac{q_{n_0}(\lambda)}{-\lambda} - \delta p_{n_0-1}(\lambda) = \eta(\lambda) \biggl(\frac{s_{n_0}(\lambda)}{-\lambda} - \delta r_{n_0-1}(\lambda)\biggr) 
     = \eta(\lambda) \Phi_+(\lambda), \quad \lambda\in\sigma. 
  \end{align*}
  From the considerations above, we see that the two functions 
  \begin{align*}
    - \frac{q_{n_0}(z)}{p_{n_0-1}(z)} - \delta z, && \frac{s_{n_0}(z)}{r_{n_0-1}(z)} + \delta z, \quad z\in\C\backslash\R,
  \end{align*}
  are Herglotz--Nevanlinna functions.
  Whereas the latter one is  never constant, the former one is constant if and only if $n_0=1$ and $\delta=l_1$.
  However, as this case would contradict the definition of $\delta$, we see that neither of the functions is actually constant.   
   Thus, a computation reveals that also the function  
  \begin{align*}
     -\biggl(-\frac{q_{n_0}(z)}{p_{n_0-1}(z)} - \delta z\biggr)^{-1} - \biggl(\frac{s_{n_0}(z)}{r_{n_0-1}(z)} + \delta z\biggr)^{-1} = -  \frac{2 W(z)}{z\Phi_-(z)\Phi_+(z)}, \quad z\in\C\backslash\R, 
  \end{align*}
  is a non-constant Herglotz--Nevanlinna function. 
  It remains to evaluate  
  \begin{align*}
    \Phi_-(0) & = -q_{n_0}'(0) - \delta p_{n_0-1}(0) = \sum_{i=1}^{n_0} l_i - \delta = 1, \\
    \Phi_+(0) & = -s_{n_0}'(0) - \delta r_{n_0-1}(0) = \sum_{i=n_0+1}^N l_i + \delta = \sum_{i=1}^N l_i - 1= 1, 
  \end{align*}
  to see that the pair $(\Phi_-,\Phi_+)$ is a solution of the coupling problem with data $\eta$.

 {\em Step 2; the coupling problem with data $\eta$ is solvable when $\sigma$ is a finite set.}
  Let us define the finite sets 
\begin{align*}
   \sigma_- & = \{\lambda\in\sigma\,|\, \eta(\lambda)=0\}, & \sigma_+ & = \{\lambda\in\sigma\,|\, \eta(\lambda)=\infty\}, & \tilde{\sigma} & = \sigma\backslash(\sigma_+\cup\sigma_-),
\end{align*} 
as well as the polynomial $\tilde{W}$ by 
\begin{align*}
 \tilde{W}(z) = \prod_{\lambda\in\tilde{\sigma}} \biggl(1-\frac{z}{\lambda}\biggr), \quad z\in\C,
\end{align*}
and the sequence $\tilde{\eta}\in\hat{\R}^{\tilde{\sigma}}$ via 
\begin{align*}
  \tilde{\eta}(\lambda)  = \eta(\lambda) \prod_{\kappa\in\sigma_-} \biggl(1-\frac{\lambda}{\kappa}\biggr)^{-1} \prod_{\kappa\in\sigma_+} \biggl(1-\frac{\lambda}{\kappa}\biggr), \quad\lambda\in\tilde{\sigma}. 
\end{align*}
 For every $\lambda\in\tilde{\sigma}$, the coupling constant $\tilde{\eta}(\lambda)$  is finite and non-zero with  
 \begin{align*}
   \frac{\tilde{\eta}(\lambda)}{\lambda \tilde{W}'(\lambda)} = \frac{\eta(\lambda)}{\lambda W'(\lambda)} \prod_{\kappa\in\sigma_+} \biggl(1-\frac{\lambda}{\kappa}\biggr)^2  \leq 0,
 \end{align*}
 due to the admissibility of $\eta$. 
Thus it follows from the first part of the proof that there is a pair of real entire functions $(\tilde{\Phi}_-,\tilde{\Phi}_+)$ of exponential type zero with 
\begin{align*}
  \tilde{\Phi}_-(\lambda) = \tilde{\eta}(\lambda) \tilde{\Phi}_+(\lambda), \quad \lambda\in\tilde{\sigma},
\end{align*}
such that the function 
\begin{align*}
  \frac{z\tilde{\Phi}_-(z)\tilde{\Phi}_+(z)}{\tilde{W}(z)}, \quad z\in\C\backslash\R,
\end{align*}
is a Herglotz--Nevanlinna function and such that 
\begin{align*}
  \tilde{\Phi}_-(0) = \tilde{\Phi}_+(0)  = 1. 
\end{align*}
It is now straightforward to verify that the pair $(\Phi_-,\Phi_+)$ defined by  
 \begin{align*}
   \Phi_\pm(z) = \tilde{\Phi}_\pm(z) \prod_{\lambda\in\sigma_\pm} \biggl(1-\frac{z}{\lambda}\biggr), \quad z\in\C,
\end{align*}
 is a solution of the coupling problem with data~$\eta$.

 {\em Step 3; the coupling problem with data $\eta$ is solvable.}
 For each $k\in\N$, let us define the finite set $\sigma_k=\sigma\cap[-k,k]$, the polynomial $W_k$ via 
 \begin{align*}
  W_k(z) = \prod_{\lambda\in\sigma_k} \biggl(1-\frac{z}{\lambda}\biggr), \quad z\in\C, 
 \end{align*}
 and the sequence $\eta_k\in\hat{\R}^{\sigma_k}$ by $\eta_k(\lambda) = \eta(\lambda)$ for every $\lambda\in\sigma_k$. 
 Then the inequality  
 \begin{align*}
  \frac{\eta_k(\lambda)}{\lambda W_k'(\lambda)} = \frac{\eta(\lambda)}{\lambda W'(\lambda)}  \prod_{\kappa\in\sigma\backslash\sigma_k} \biggl(1-\frac{\lambda}{\kappa}\biggr) \leq 0 
 \end{align*}
 holds for all those $\lambda\in\sigma_k$ for which $\eta_k(\lambda)$ is finite.
 More precisely, this is due to admissibility of the coupling constants $\eta$ and the fact that $|\lambda|<|\kappa|$ when $\kappa\in\sigma\backslash\sigma_k$.
 As we have seen in the second part of the proof, this guarantees that there is a pair of real entire functions $(\Phi_-^k,\Phi_+^k)$ of exponential type zero such that 
 \begin{align*}
   \Phi_-^k(\lambda) = \eta_k(\lambda) \Phi_+^k(\lambda), \quad \lambda\in\sigma_k,
 \end{align*}
 such that the function 
 \begin{align*}
  \frac{z\Phi_-^k(z)\Phi_+^k(z)}{W_k(z)}, \quad z\in\C\backslash\R, 
 \end{align*}
 is a Herglotz--Nevanlinna function and such that 
 \begin{align*}
   \Phi_-^k(0) = \Phi_+^k(0) = 1. 
 \end{align*}
 Because the estimate in~\eqref{eqnPhiBound} gives rise to the locally uniform bound 
 \begin{align*}
   |\Phi_\pm^k(z)| \leq \prod_{\lambda\in\sigma_k} \biggl(1+\frac{|z|}{|\lambda|}\biggr) \leq \prod_{\lambda\in\sigma} \biggl(1+\frac{|z|}{|\lambda|}\biggr), \quad z\in\C,
 \end{align*}
  we may choose a subsequence $k_l$ such that the pairs $(\Phi_-^{k_l},\Phi_+^{k_l})$ converge locally uniformly to a pair of real entire functions $(\Phi_-,\Phi_+)$.
 Due to the above bound, both of these functions are of exponential type zero. 
 Furthermore, it follows readily that the pair $(\Phi_-,\Phi_+)$ satisfies the coupling condition. 
 In fact, for every $\lambda\in\sigma$ we have 
 \begin{align*}
   \Phi_-^{k_l}(\lambda) = \eta(\lambda) \Phi_+^{k_l}(\lambda)
 \end{align*}
 as long as $l$ is large enough and it suffices to take the limit $l\rightarrow\infty$. 
 Of course, this has to be interpreted appropriately when $\eta(\lambda)$ is infinite.
 We are left to note that  
  \begin{align*}
    \im\biggl(\frac{z\Phi_-(z)\Phi_+(z)}{W(z)}\biggr) = \lim_{l\rightarrow\infty} \im\biggl(\frac{z\Phi^{k_l}_-(z)\Phi^{k_l}_+(z)}{W_{k_l}(z)}\biggr) \geq 0, \quad \im(z)>0,
  \end{align*}
  as well as that we have the normalization 
  \begin{align*}
    \Phi_-(0) & = \lim_{l\rightarrow\infty} \Phi^{k_l}_-(0) = 1, &  \Phi_+(0) & = \lim_{l\rightarrow\infty} \Phi^{k_l}_+(0) = 1,
  \end{align*}
  to conclude that $(\Phi_-,\Phi_+)$ is a solution of the coupling problem with data~$\eta$. 
\end{proof}

It only remains to verify that solutions depend continuously on the given data. 

\begin{proof}[Proof of stability]
  Let $\eta_k\in\hat{\R}^\sigma$ be a sequence of coupling constants that converge to some $\eta$ in the product topology and suppose that the pairs $(\Phi^k_-,\Phi^k_+)$ are solutions of the coupling problems with data $\eta_k$. 
  From the inequality in~\eqref{eqnPhiBound}, we get the locally uniform bound
   \begin{align}\label{eqnstabbound}
   |\Phi_\pm^k(z)| \leq \prod_{\lambda\in\sigma} \biggl(1+\frac{|z|}{|\lambda|}\biggr), \quad z\in\C.
 \end{align}
  If a subsequence $(\Phi^{k_l}_-,\Phi^{k_l}_+)$ converges locally uniformly to a pair $(\Phi^\infty_-,\Phi^\infty_+)$, then the functions $\Phi^\infty_-$ and $\Phi^\infty_+$ are real entire and of exponential type zero due to~\eqref{eqnstabbound}.
  When $\lambda\in\sigma$ is such that $\eta(\lambda)$ is finite, then the coupling condition yields 
  \begin{align*}
    \Phi^\infty_-(\lambda) = \lim_{l\rightarrow\infty} \Phi^{k_l}_-(\lambda) = \lim_{l\rightarrow\infty} \eta_{k_l}(\lambda) \Phi^{k_l}_+(\lambda) = \eta(\lambda) \Phi^\infty_+(\lambda). 
  \end{align*}
  In a similar manner, we see that $\Phi^\infty_+(\lambda)=0$ when $\lambda\in\sigma$ is such that $\eta(\lambda)=\infty$ is not finite. 
  Upon noting that  
  \begin{align*}
    \im\biggl(\frac{z\Phi^\infty_-(z)\Phi^\infty_+(z)}{W(z)}\biggr) = \lim_{l\rightarrow\infty} \im\biggl(\frac{z\Phi^{k_l}_-(z)\Phi^{k_l}_+(z)}{W(z)}\biggr) \geq 0, \quad \im(z)>0,
  \end{align*}
  as well as verifying the normalization 
  \begin{align*}
    \Phi^\infty_-(0) & = \lim_{l\rightarrow\infty} \Phi^{k_l}_-(0) = 1, &  \Phi^\infty_+(0) & = \lim_{l\rightarrow\infty} \Phi^{k_l}_+(0) = 1,
  \end{align*}
  we see that the pair $(\Phi^\infty_-,\Phi^\infty_+)$ is a solution of the coupling problem with data~$\eta$. 
  Since such a solution is unique, we may conclude by means of a compactness argument, using the bound~\eqref{eqnstabbound} and Montel's theorem, that the pairs $(\Phi_-^k,\Phi_+^k)$ converge locally uniformly to the unique solution of the coupling problem with data $\eta$. 
\end{proof}

%\bigskip
%\noindent
%{\bf Acknowledgments.}
% I gratefully acknowledge ...


\begin{thebibliography}{XX}

\bibitem{abcl91}
M.\ J.\ Ablowitz and P.\ A.\ Clarkson, {\em Solitons, nonlinear evolution equations and inverse scattering}, London Mathematical Society Lecture Note Series {\bf 149}, Cambridge University Press, Cambridge, 1991. 

\bibitem{akgl93}
N.\ I.\ Akhiezer and I.\ M.\ Glazman, {\em Theory of linear operators in Hilbert space; Two volumes bound as one}, Dover Publications, Inc., New York 1993. 

\bibitem{besasz00}
R.\ Beals, D.\ H.\ Sattinger and J.\ Szmigielski, {\em Multipeakons and the classical moment problem}, Adv.\ Math.\ {\bf 154}, no.~2, 229--257 (2000).

\bibitem{besasz07}
R.\ Beals, D.\ H.\ Sattinger and J.\ Szmigielski, {\em The string density problem and the Camassa--Holm equation}, Philos.\ Trans.\ R.\ Soc.\ Lond.\ Ser.\ A Math.\ Phys.\ Eng.\ Sci.\ {\bf 365} (2007), no.~1858, 2299--2312.

\bibitem{bebrwe08}
C.\ Bennewitz, B.\ M.\ Brown and R.\ Weikard, {\em Inverse spectral and scattering theory for the half-line left-definite Sturm--Liouville problem}, SIAM J.\ Math.\ Anal.\ {\bf 40} (2008/09), no.~5, 2105--2131.

%%%%%%%%%%%%%%%%%%%%%%%
\bibitem{bebrwe15}
C.\ Bennewitz, B.\ M.\ Brown and R.\ Weikard, {\em The spectral problem for the dispersionless Camassa--Holm equation}, Proceedings of the IWOTA (to appear), 2015.

\bibitem{bosh2}
A.\ Boutet de Monvel and D.\ Shepelsky, {\em Riemann--Hilbert problem in the inverse scattering for the Camassa--Holm equation on the line}, in {\em Probability, geometry and integrable systems}, 53--75, Math.\ Sci.\ Res.\ Inst.\ Publ., 55, Cambridge Univ.\ Press, Cambridge, 2008.

\bibitem{chh} 
R.\ Camassa, D.\ Holm, and J.\ M.\ Hyman, {\em A new integrable shallow water equation}, Adv.\ Appl.\ Mech.\ 31 (1994), 1--33.

\bibitem{dB68}
L.\ de Branges, {\em Hilbert spaces of entire functions}, Prentice-Hall, Inc., Englewood Cliffs, N.J.\ 1968.

\bibitem{deitzh93}
P.\ A.\ Deift, A.\ R.\ Its and X.\ Zhou, {\em Long-time asymptotics for integrable nonlinear wave equations}, in {\em Important developments in soliton theory}, 181--204, Springer Ser.\ Nonlinear Dynam., Springer, Berlin, 1993.

\bibitem{LeftDefiniteSL}
J.\ Eckhardt, {\em Direct and inverse spectral theory of singular left-definite Sturm--Liouville operators}, J.\ Differential Equations {\bf 253} (2012), no.~2, 604--634.

\bibitem{ThreeSpectra}
J.\ Eckhardt, {\em Two inverse spectral problems for a class of singular Krein strings}, Int.\ Math.\ Res.\ Not.\ IMRN {\bf 2014}, no.~13, 3692--3713.

\bibitem{ConservMP}
J.\ Eckhardt and A.\ Kostenko, {\em An isospectral problem for global conservative multi-peakon solutions of the Camassa--Holm equation}, Comm.\ Math.\ Phys.\ {\bf 329} (2014), no.~3, 893--918. 

%%%%%%%%%%%%%%%%%%%%%%%%
\bibitem{CHPencil}
J.\ Eckhardt and A.\ Kostenko, {\em Quadratic operator pencils associated with the conservative Camassa--Holm flow}, Bull.\ Soc.\ Math.\ France (to appear), \arxiv{1406.3703}.

\bibitem{IndefiniteString}
J.\ Eckhardt and A.\ Kostenko, {\em The inverse spectral problem for indefinite strings}, Invent.\ Math.\ {\bf 204} (2016), no.~3, 939--977.

\bibitem{CouplingProblem}
J.\ Eckhardt and G.\ Teschl, {\em A coupling problem for entire functions and its application to the long-time asymptotics of integrable wave equations}, Nonlinearity {\bf 29} (2016), no.~3, 1036--1046.

\bibitem{IsospecCH}
J.\ Eckhardt and G.\ Teschl, {\em On the isospectral problem of the dispersionless Camassa--Holm equation}, Adv.\ Math.\ {\bf 235} (2013), 469--495.

\bibitem{gets00}
F.\ Gesztesy and E.\ Tsekanovskii, {\em On matrix-valued Herglotz functions}, Math.\ Nachr.\ {\bf 218} (2000), 61--138.

\bibitem{husa91}
J.\ K.\ Hunter and R.\ Saxton,  {\em Dynamics of director fields}, SIAM J.\ Appl.\ Math.\ {\bf 51} (1991), no.~6, 1498--1521.

\bibitem{kakr74}
I.\ S.\ Kac and M.\ G.\ Krein, {\em R--functions --- analytic functions mapping the upper half-plane into itself}, Amer.\ Math.\ Soc.\ Transl.\ Ser.\ 2, {\bf 103} (1974), 1--18.

\bibitem{ko76}
S.\ Kotani, {\em A remark to the ordering theorem of L.\ de Branges}, J.\ Math.\ Kyoto Univ.\ {\bf 16} (1976), no.~3, 665--674.

\bibitem{ko75}
S.\ Kotani, {\em On a generalized Sturm--Liouville operator with a singular boundary}, J.\ Math.\ Kyoto Univ.\ {\bf 15} (1975), no.~2, 423--454.

\bibitem{krla14}
M.\ G.\ Krein and H.\ Langer, {\em Continuation of Hermitian positive definite functions and related questions}, Integral Equations Operator Theory {\bf 78} (2014), no.~1, 1--69. 

\bibitem{le96}
B.\ Ya.\ Levin, {\em Lectures on Entire Functions}, Transl.\ Math.\ Mon.\ {\bf 150}, Amer.\ Math.\ Soc., Providence, RI, 1996.

\bibitem{roro94}
M.\ Rosenblum and J.\ Rovnyak, {\em Topics in Hardy classes and univalent functions}, Birkh\"{a}user Verlag, Basel, 1994.

\end{thebibliography}
\end{document}